\newtheorem{thm}{Theorem}[section]
\newtheorem{defn}[thm]{Definition}
\newtheorem{prop}[thm]{Proposition}
\newtheorem{lem}[thm]{Lemma}
\newtheorem{ex}[thm]{Example}
\begin{document}

\title{A composition theory for upward planar orders}

\author[a]{Xue Dong}

\author[a]{Xuexing Lu}

\author[b,c]{Yu Ye}

\affil[a]{\small School of Mathematics and Statistics, Zaozhuang University}
\affil[b]{\small Department of Mathematics, USTC}
\affil[c]{Wu Wen-Tsun key laboratory of Mathematics, CAS}

\renewcommand \Authands{ and }
\maketitle

\begin{abstract}
An upward planar order on an acyclic directed graph $G$ is a special linear extension of the edge poset of $G$ that satisfies the nesting condition. This order was introduced to combinatorially characterize upward plane graphs and progressive plane graphs (commonly known as plane string diagrams). In this paper, motivated by the theory of graphical calculus for monoidal categories, we establish a composition theory for upward planar orders. The main result is that the composition of upward planar orders is an upward planar order. This theory provides a practical method to calculate the upward planar order of a progressive plane graph or an upward plane graph.
\end{abstract}

\text{\emph{Keywords}: progressive plane graph; upward plane graph; upward planar order; composition}

\tableofcontents
\section{Introduction}

A  drawing of a (directed or undirected) graph in the plane is called \textbf{planar}, if  its edges intersect only at their endpoints. A planar drawing of a directed graph is called \textbf{upward}, if all edges increase monotonically in the vertical direction (or any other fixed direction). A directed graph together with an upward planar drawing is called an \textbf{upward plane graph}. Fig. $1$ shows an example.
\begin{center}
\begin{tikzpicture}[scale=0.25]
	
	\node (v2) at (0.5,2) {};
	\node (v1) at (-3,-3) {};
	\node (v3) at (3,-2.5) {};
	\node (v4) at (7,0) {};
	\node (v8) at (0.5,-2) {};
	\node (v5) at (-0.5,-7.5) {};
	\node (v6) at (3.5,-5.5) {};
	\node (v7) at (6,-8.5) {};
	\draw  (-3,-3) -- (0.5,2)[postaction={decorate, decoration={markings,mark=at position .5 with {\arrowreversed[black]{stealth}}}}];
	\draw  (3,-2.5) -- (0.5,2)[postaction={decorate, decoration={markings,mark=at position .5 with {\arrowreversed[black]{stealth}}}}];
	\draw  (3,-2.5) -- (7,0)[postaction={decorate, decoration={markings,mark=at position .5 with {\arrowreversed[black]{stealth}}}}];
	\draw  (-0.5,-7.5) -- (3.5,-5.5)[postaction={decorate, decoration={markings,mark=at position .5 with {\arrowreversed[black]{stealth}}}}];
	\draw   (6,-8.5) -- (3.5,-5.5)[postaction={decorate, decoration={markings,mark=at position .5 with {\arrowreversed[black]{stealth}}}}];
	\draw  (-0.5,-7.5) -- (-3,-3)[postaction={decorate, decoration={markings,mark=at position .5 with {\arrowreversed[black]{stealth}}}}];
	\draw  (-0.5,-7.5) -- (0.5,-2)[postaction={decorate, decoration={markings,mark=at position .5 with {\arrowreversed[black]{stealth}}}}];
	\draw  (3.5,-5.5)-- (0.5,-2)[postaction={decorate, decoration={markings,mark=at position .5 with {\arrowreversed[black]{stealth}}}}];
	\draw   (6,-8.5) -- (7,0)[postaction={decorate, decoration={markings,mark=at position .5 with {\arrowreversed[black]{stealth}}}}];
	\node (v9) at (2,-11) {};
	\draw  (2,-11) --  (6,-8.5)[postaction={decorate, decoration={markings,mark=at position .5 with {\arrowreversed[black]{stealth}}}}];
	\draw  (2,-11) -- (-0.5,-7.5)[postaction={decorate, decoration={markings,mark=at position .5 with {\arrowreversed[black]{stealth}}}}];
	\draw  (3.5,-5.5) -- (7,0)[postaction={decorate, decoration={markings,mark=at position .5 with {\arrowreversed[black]{stealth}}}}];
	\draw[fill] (v1) circle [radius=0.2];
	\draw[fill] (v2) circle [radius=0.2];
	\draw[fill] (v3) circle [radius=0.2];
	\draw[fill] (v4) circle [radius=0.2];
	\draw[fill] (v5) circle [radius=0.2];
	\draw[fill] (v6) circle [radius=0.2];
	\draw[fill] (v7) circle [radius=0.2];
	\draw[fill] (v8) circle [radius=0.2];
	\draw[fill] (v9) circle [radius=0.2];
	\draw  plot[smooth, tension=.7] coordinates {(7,0) (8,-2.5) (8.5,-5.5) (7.5,-7.5) (6,-8.5)}[postaction={decorate, decoration={markings,mark=at position .5 with {\arrow[black]{stealth}}}}];

	\node (v12) at (14,0.25) {};
	\node (v13) at (13,-11) {};
	\draw[fill] (v12) circle [radius=0.2];
	\draw[fill] (v13) circle [radius=0.2];
	\node (v14) at (11.25,-5.25) {};
	\node (v15) at (15.25,-6.5) {};
	\draw (14,0.25)  -- (11.25,-5.25)[postaction={decorate, decoration={markings,mark=at position .5 with {\arrow[black]{stealth}}}}];
	\draw (11.25,-5.25) -- (13,-11)[postaction={decorate, decoration={markings,mark=at position .5 with {\arrow[black]{stealth}}}}];
	\draw (14,0.25) -- (15.25,-6.5)[postaction={decorate, decoration={markings,mark=at position .5 with {\arrow[black]{stealth}}}}];
	\draw (15.25,-6.5)-- (13,-11)[postaction={decorate, decoration={markings,mark=at position .5 with {\arrow[black]{stealth}}}}];
	\node (v16) at (13.25,-3) {};
	\node (v17) at (12.75,-8) {};
	\draw  (13.25,-3) --(12.75,-8)[postaction={decorate, decoration={markings,mark=at position .5 with {\arrow[black]{stealth}}}}];
	\draw[fill] (v14) circle [radius=0.2];
	\draw[fill] (v15) circle [radius=0.2];
	\draw[fill] (v16) circle [radius=0.2];
	\draw[fill] (v17) circle [radius=0.2];
	\node (v18) at (17.75,-3.5) {};
	\node (v19) at (17,-9) {};
	\draw (17.75,-3.5)-- (17,-9)[postaction={decorate, decoration={markings,mark=at position .5 with {\arrow[black]{stealth}}}}];
	\draw[fill] (v18) circle [radius=0.2];
	\draw[fill] (v19) circle [radius=0.2];
	\draw  plot[smooth, tension=.7] coordinates {(14,0.25) (16.25,-2.25)  (15.25,-6.5)}[postaction={decorate, decoration={markings,mark=at position .5 with {\arrow[black]{stealth}}}}];
\end{tikzpicture}

Figure $1$. An upward plane graph
\end{center}

A directed graph is called an \textbf{upward planar graph} if it admits an upward planar drawing. An upward planar graph is necessarily acyclic. As natural visibility representations of acyclic directed graphs, upward plane graphs are commonly used to represent hierarchical structures, such as PERT networks, Hasse diagrams, family trees, etc., and have been extensively studied in the fields of graph theory, graph drawing algorithms, ordered set theory, etc. Upward planarity has been an active field of graph theory; for a review, see \cite{[GT95]}.

An essentially equivalent notion in the field of graphical calculi is that of a \textbf{progressive plane graph} (or \textbf{plane string diagram}), which is an upward planar drawing of an acyclic directed graph (possibly with isolated vertices) in a plane box satisfying the boundary conditions: $(1)$ all vertices drawn on the horizontal boundaries are leaves (vertices of degree one); $(2)$ no vertices are drawn on the vertical boundaries \cite{[HLY19]}, see Fig. 2 for an example. It was introduced by Joyal and Street \cite{[JS88],[JS91]} as a core notion in the graphical calculus for monoidal categories. A progressive plane graph is just a boxed upward plane graph satisfying the boundary conditions, with the edges intersecting with the top and bottom boundaries as input and output edges, respectively.

\begin{center}
\begin{tikzpicture}[scale=0.45]

\draw [dashed] (-3,1.5) rectangle (8.5,-6.5);
\node [above](v1) at (-1,1.5) {};
\node (v2) at (0,-1) {};
\node (v6) at (-1.5,-4.5) {};
\node [left] at (-1.5,-4.5) {};
\node (v3) at (1.5,0.5) {};
\node (v8) at (4,-2) {};
\node (v4) at (0.5,-3.5) {};
\node [below](v5) at (0.5,-6.5) {};
\node [above](v7) at (4,1.5) {};
\draw  (-1,1.5)  -- (0,-1)[postaction={decorate, decoration={markings,mark=at position .5 with {\arrow[black]{stealth}}}}];
\draw   (1.5,0.5) -- (0,-1)[postaction={decorate, decoration={markings,mark=at position .5 with {\arrow[black]{stealth}}}}];
\draw   (1.5,0.5)--  (0.5,-3.5)[postaction={decorate, decoration={markings,mark=at position .5 with {\arrow[black]{stealth}}}}];
\draw  (0.5,-3.5) -- (v5)[postaction={decorate, decoration={markings,mark=at position .5 with {\arrow[black]{stealth}}}}];
\draw  (0,-1) -- (-1.5,-4.5)[postaction={decorate, decoration={markings,mark=at position .5 with {\arrow[black]{stealth}}}}];
\draw  (0,-1) -- (0.5,-3.5)[postaction={decorate, decoration={markings,mark=at position .5 with {\arrow[black]{stealth}}}}];
\draw  (4,1.5)-- (4,-2)[postaction={decorate, decoration={markings,mark=at position .5 with {\arrow[black]{stealth}}}}];
\draw   (1.5,0.5)-- (4,-2)[postaction={decorate, decoration={markings,mark=at position .5 with {\arrow[black]{stealth}}}}];
\node (v9) at (3,-5.5) {};
\draw  (0.5,-3.5) -- (3,-5.5)[postaction={decorate, decoration={markings,mark=at position .5 with {\arrow[black]{stealth}}}}];
\draw  (4,-2) -- (3,-5.5)[postaction={decorate, decoration={markings,mark=at position .5 with {\arrow[black]{stealth}}}}];
\node (v10) at (2,-3.5) {};
\draw   (1.5,0.5) -- (2,-3.5)[postaction={decorate, decoration={markings,mark=at position .5 with {\arrow[black]{stealth}}}}];
\draw  (4,-2) -- (2,-3.5)[postaction={decorate, decoration={markings,mark=at position .5 with {\arrow[black]{stealth}}}}];

\draw [fill](v2) circle [radius=0.11];
\draw [fill](v3) circle [radius=0.11];
\draw [fill](v4) circle [radius=0.11];
\draw [fill](v6) circle [radius=0.11];
\draw [fill](v8) circle [radius=0.11];
\draw [fill](v9) circle [radius=0.11];
\draw [fill](v10) circle [radius=0.11];

\node at (-1,0.5) {};
\node at (4.5,0) {};
\node at (0,-5.5) {};
\node at (-1.5,-3) {};

\draw(6.5,1.5)--(6.5,-6.5)[postaction={decorate, decoration={markings,mark=at position .5 with {\arrow[black]{stealth}}}}];
\node [above]at(6.5,1.5) {};
\node [below]at (6.5,-6.5) {};
\node at (7,-3) {};

\draw [fill] (-1,1.5) circle [radius=0.11];
\draw [fill](0.5,-6.5) circle [radius=0.11];
\draw [fill](4,1.5) circle [radius=0.11];
\draw [fill](6.5,-6.5) circle [radius=0.11];
\draw [fill](6.5,1.5) circle [radius=0.11];

\draw [fill](5.5,-1) circle [radius=0.11];
\draw [fill](2.7,-1.7) circle [radius=0.11];

\draw [fill](0.35,0.45) circle [radius=0.11];
\end{tikzpicture}

Figure 2. A progressive plane graph with isolated vertices
\end{center}

There are several approaches to characterize upward planarity. The first approach, given independently in \cite{[BT88]} and \cite{[Ke87]},   characterizes  upward planar graphs as  spanning subgraphs of \textbf{planar $st$ graphs}.
The second approach, given in \cite{[BB91],[BBLM94]}, characterizes  upward plane graphs by means of \textbf{$2$-list-embedding}  and \textbf{upward-consistent assignment} of sources and sinks to faces. The third approach, given in \cite{[LY19]}, characterizes upward plane graphs in term of \textbf{upward planar orders}. From the perspective of graphical calculus for monoidal categories, the three approaches are closely related to each other and can be interpreted by the \textbf{unit convention}. A detailed explanation will be given elsewhere.

An upward planar order on an acyclic directed graph $G$ is a special linear extension of the edge poset of $G$ that satisfies the nesting condition (the $Q_2$-condition in Definition \ref{upo1}). The axioms of an upward planar order are reviewed in Definition \ref{upo} and Definition \ref{upo1}. In \cite{[LY19]}, it was shown that there is a natural upward planar order associated with an upward plane graph, see Fig. $3$
for an example, where the linear order of edges represents the associated upward planar order of the upward plane graph in Fig. $1$.

\begin{center}
\begin{tikzpicture}[scale=0.3]
\node (v2) at (0.5,2) {};
\node (v1) at (-3,-3) {};
	\node (v3) at (3,-2.5) {};
	\node (v4) at (7,0) {};
	\node (v8) at (0.5,-2) {};
	\node (v5) at (-0.5,-7.5) {};
	\node (v6) at (3.5,-5.5) {};
	\node (v7) at (6,-8.5) {};
	\draw  (-3,-3) -- (0.5,2)[postaction={decorate, decoration={markings,mark=at position .5 with {\arrowreversed[black]{stealth}}}}];
	\draw  (3,-2.5) -- (0.5,2)[postaction={decorate, decoration={markings,mark=at position .5 with {\arrowreversed[black]{stealth}}}}];
	\draw  (3,-2.5) -- (7,0)[postaction={decorate, decoration={markings,mark=at position .5 with {\arrowreversed[black]{stealth}}}}];
	\draw  (-0.5,-7.5) -- (3.5,-5.5)[postaction={decorate, decoration={markings,mark=at position .5 with {\arrowreversed[black]{stealth}}}}];
	\draw   (6,-8.5) -- (3.5,-5.5)[postaction={decorate, decoration={markings,mark=at position .5 with {\arrowreversed[black]{stealth}}}}];
	\draw  (-0.5,-7.5) -- (-3,-3)[postaction={decorate, decoration={markings,mark=at position .5 with {\arrowreversed[black]{stealth}}}}];
	\draw  (-0.5,-7.5) -- (0.5,-2)[postaction={decorate, decoration={markings,mark=at position .5 with {\arrowreversed[black]{stealth}}}}];
	\draw  (3.5,-5.5)-- (0.5,-2)[postaction={decorate, decoration={markings,mark=at position .5 with {\arrowreversed[black]{stealth}}}}];
	\draw   (6,-8.5) -- (7,0)[postaction={decorate, decoration={markings,mark=at position .5 with {\arrowreversed[black]{stealth}}}}];
	\node (v9) at (2,-11) {};
	\draw  (2,-11) --  (6,-8.5)[postaction={decorate, decoration={markings,mark=at position .5 with {\arrowreversed[black]{stealth}}}}];
	\draw  (2,-11) -- (-0.5,-7.5)[postaction={decorate, decoration={markings,mark=at position .5 with {\arrowreversed[black]{stealth}}}}];
	\draw  (3.5,-5.5) -- (7,0)[postaction={decorate, decoration={markings,mark=at position .5 with {\arrowreversed[black]{stealth}}}}];
	\draw[fill] (v1) circle [radius=0.2];
	\draw[fill] (v2) circle [radius=0.2];
	\draw[fill] (v3) circle [radius=0.2];
	\draw[fill] (v4) circle [radius=0.2];
	\draw[fill] (v5) circle [radius=0.2];
	\draw[fill] (v6) circle [radius=0.2];
	\draw[fill] (v7) circle [radius=0.2];
	\draw[fill] (v8) circle [radius=0.2];
	\draw[fill] (v9) circle [radius=0.2];
	\draw  plot[smooth, tension=.7] coordinates {(7,0) (8,-2.5) (8.5,-5.5) (7.5,-7.5) (6,-8.5)}[postaction={decorate, decoration={markings,mark=at position .5 with {\arrow[black]{stealth}}}}];
	\node [scale=0.68]at (-1.9,0) {$1$};
	\node [scale=0.68]at (-2.5,-5.6) {$2$};
	\node [scale=0.68]at (-0.5,-4.3) {$3$};
	\node [scale=0.68]at (2.1,-2.9) {$4$};
	\node [scale=0.68]at (2.2,0.5) {$5$};
	\node [scale=0.68]at (4.8,-0.7) {$6$};
	\node [scale=0.68]at (4.1,-3.7) {$7$};
	\node [scale=0.68]at (1.9,-7.2) {$8$};
	\node [scale=0.68]at (0.1,-9.7) {$9$};
	\node [scale=0.68]at (4.2,-7.4) {$10$};
	\node[scale=0.68] at (5.7,-5) {$11$};
	\node [scale=0.68]at (9.4,-4.5) {$12$};
	\node [scale=0.68]at (4.4,-10.5) {$13$};
	
	\node (v12) at (14,0.25) {};
	\node (v13) at (13,-11) {};
	\draw[fill] (v12) circle [radius=0.2];
	\draw[fill] (v13) circle [radius=0.2];
	\node (v14) at (11.25,-5.25) {};
	\node (v15) at (15.25,-6.5) {};
	\draw (14,0.25)  -- (11.25,-5.25)[postaction={decorate, decoration={markings,mark=at position .5 with {\arrow[black]{stealth}}}}];
	\draw (11.25,-5.25) -- (13,-11)[postaction={decorate, decoration={markings,mark=at position .5 with {\arrow[black]{stealth}}}}];
	\draw (14,0.25) -- (15.25,-6.5)[postaction={decorate, decoration={markings,mark=at position .5 with {\arrow[black]{stealth}}}}];
	\draw (15.25,-6.5)-- (13,-11)[postaction={decorate, decoration={markings,mark=at position .5 with {\arrow[black]{stealth}}}}];
	\node (v16) at (13.25,-3) {};
	\node (v17) at (12.75,-8) {};
	\draw  (13.25,-3) --(12.75,-8)[postaction={decorate, decoration={markings,mark=at position .5 with {\arrow[black]{stealth}}}}];
	\draw[fill] (v14) circle [radius=0.2];
	\draw[fill] (v15) circle [radius=0.2];
	\draw[fill] (v16) circle [radius=0.2];
	\draw[fill] (v17) circle [radius=0.2];
	\node[scale=0.68] at (12,-2) {$14$};
	\node[scale=0.68] at (11.25,-8.75) {$15$};
	\node[scale=0.68] at (13.75,-5.75) {$16$};
	\node[scale=0.68] at (15.25,-3) {$17$};
	\node [scale=0.68] at (14.75,-9.25) {$19$};
	\node (v18) at (17.75,-3.5) {};
	\node (v19) at (17,-9) {};
	\draw (17.75,-3.5)-- (17,-9)[postaction={decorate, decoration={markings,mark=at position .5 with {\arrow[black]{stealth}}}}];
	\draw[fill] (v18) circle [radius=0.2];
	\draw[fill] (v19) circle [radius=0.2];
	\node [scale=0.68] at (18.25,-6.5) {$20$};
	\draw  plot[smooth, tension=.7] coordinates {(14,0.25) (16.25,-2.25)  (15.25,-6.5)}[postaction={decorate, decoration={markings,mark=at position .5 with {\arrow[black]{stealth}}}}];
	\node[scale=0.68] at (16.75,-1) {$18$};
\end{tikzpicture}

Figure $3$. The upward planar order associated with the upward plane graph in Fig. $1$.
\end{center}

The existence of the associated upward planar order of an upward plane graph has been proved in  \cite{[LY19]}, but in an indirected way.  The next interesting problem in the theory of upward planar order is how to calculate the associated upward planar order directly. In this paper, we take a compositional strategy to solve this problem, which was motivated by the graphical calculus for monoidal categories \cite{[JS91]}, where to calculate the value of a diagram, people have to cut the diagram into elementary layers. We establish a composition theory for upward planar orders, which is a natural generalization of  the composition theory for planar orders on \textbf{processive} graphs and $st$ graph \cite{[HLY19]}.

The idea of composition here is rather simple. Take the upward plane graph in Fig. $1$ as an example. We first cut it in to elementary layers (Fig. $4$). Then we make these layers into elementary progressive plane graphs (Fig. $5$), whose associated upward planar orders can be obtained directly, following the convention that first from left to right and then from up to down (Fig. $6$). The final step is to compose all the upward planar orders of elementary layers into one upward planar order (Fig. $3$).

\begin{center}
\begin{tikzpicture}[scale=0.3]
\node (v2) at (0.5,2) {};
\node (v1) at (-3,-3) {};
\node (v3) at (3,-2.5) {};
\node (v4) at (7,0) {};
\node (v8) at (0.5,-2) {};
\node (v5) at (-0.5,-7.5) {};
\node (v6) at (3.5,-5.5) {};
\node (v7) at (6,-8.5) {};
\draw  (-3,-3) -- (0.5,2)[postaction={decorate, decoration={markings,mark=at position .5 with {\arrowreversed[black]{stealth}}}}];
\draw  (3,-2.5) -- (0.5,2)[postaction={decorate, decoration={markings,mark=at position .5 with {\arrowreversed[black]{stealth}}}}];
\draw  (3,-2.5) -- (7,0)[postaction={decorate, decoration={markings,mark=at position .5 with {\arrowreversed[black]{stealth}}}}];
\draw  (-0.5,-7.5) -- (3.5,-5.5)[postaction={decorate, decoration={markings,mark=at position .5 with {\arrowreversed[black]{stealth}}}}];
\draw   (6,-8.5) -- (3.5,-5.5)[postaction={decorate, decoration={markings,mark=at position .5 with {\arrowreversed[black]{stealth}}}}];
\draw  (-0.5,-7.5) -- (-3,-3)[postaction={decorate, decoration={markings,mark=at position .5 with {\arrowreversed[black]{stealth}}}}];
\draw  (-0.5,-7.5) -- (0.5,-2)[postaction={decorate, decoration={markings,mark=at position .5 with {\arrowreversed[black]{stealth}}}}];
\draw  (3.5,-5.5)-- (0.5,-2)[postaction={decorate, decoration={markings,mark=at position .5 with {\arrowreversed[black]{stealth}}}}];
\draw   (6,-8.5) -- (7,0)[postaction={decorate, decoration={markings,mark=at position .5 with {\arrowreversed[black]{stealth}}}}];
\node (v9) at (2,-11) {};
\draw  (2,-11) --  (6,-8.5)[postaction={decorate, decoration={markings,mark=at position .5 with {\arrowreversed[black]{stealth}}}}];
\draw  (2,-11) -- (-0.5,-7.5)[postaction={decorate, decoration={markings,mark=at position .5 with {\arrowreversed[black]{stealth}}}}];
\draw  (3.5,-5.5) -- (7,0)[postaction={decorate, decoration={markings,mark=at position .5 with {\arrowreversed[black]{stealth}}}}];
\draw[fill] (v1) circle [radius=0.2];
\draw[fill] (v2) circle [radius=0.2];
\draw[fill] (v3) circle [radius=0.2];
\draw[fill] (v4) circle [radius=0.2];
\draw[fill] (v5) circle [radius=0.2];
\draw[fill] (v6) circle [radius=0.2];
\draw[fill] (v7) circle [radius=0.2];
\draw[fill] (v8) circle [radius=0.2];
\draw[fill] (v9) circle [radius=0.2];
\draw  plot[smooth, tension=.7] coordinates {(7,0) (8,-2.5) (8.5,-5.5) (7.5,-7.5) (6,-8.5)}[postaction={decorate, decoration={markings,mark=at position .5 with {\arrow[black]{stealth}}}}];

\node (v12) at (14,0.25) {};
\node (v13) at (13,-11) {};
\draw[fill] (v12) circle [radius=0.2];
\draw[fill] (v13) circle [radius=0.2];
\node (v14) at (11.25,-5.25) {};
\node (v15) at (15.25,-6.5) {};
\draw (14,0.25)  -- (11.25,-5.25)[postaction={decorate, decoration={markings,mark=at position .5 with {\arrow[black]{stealth}}}}];
\draw (11.25,-5.25) -- (13,-11)[postaction={decorate, decoration={markings,mark=at position .5 with {\arrow[black]{stealth}}}}];
\draw (14,0.25) -- (15.25,-6.5)[postaction={decorate, decoration={markings,mark=at position .5 with {\arrow[black]{stealth}}}}];
\draw (15.25,-6.5)-- (13,-11)[postaction={decorate, decoration={markings,mark=at position .5 with {\arrow[black]{stealth}}}}];
\node (v16) at (13.25,-3) {};
\node (v17) at (12.75,-8) {};
\draw  (13.25,-3) --(12.75,-8)[postaction={decorate, decoration={markings,mark=at position .5 with {\arrow[black]{stealth}}}}];
\draw[fill] (v14) circle [radius=0.2];
\draw[fill] (v15) circle [radius=0.2];
\draw[fill] (v16) circle [radius=0.2];
\draw[fill] (v17) circle [radius=0.2];
\node (v18) at (17.75,-3.5) {};
\node (v19) at (17,-9) {};
\draw (17.75,-3.5)-- (17,-9)[postaction={decorate, decoration={markings,mark=at position .5 with {\arrow[black]{stealth}}}}];
\draw[fill] (v18) circle [radius=0.2];
\draw[fill] (v19) circle [radius=0.2];
\draw  plot[smooth, tension=.7] coordinates {(14,0.25) (16.25,-2.25)  (15.25,-6.5)}[postaction={decorate, decoration={markings,mark=at position .5 with {\arrow[black]{stealth}}}}];
\draw[dashed]  (-4.5,2.5) rectangle (19.5,-11.5);

\node (v10) at (-4.4,-9.6) {};
\node (v11) at (19.4,-10) {};
\draw [densely dotted] (-4.4,-9.6)--(19.4,-10);
\node (v20) at (-4.4,-6.2) {};
\node (v21) at (19.4,-7.6) {};
\draw [densely dotted] (-4.4,-6.2) -- (19.4,-7.6);
\node (v22) at (-4.4,-4.2) {};
\node (v23) at (19.4,-4.4) {};
\draw [densely dotted](-4.4,-4.2) --(19.4,-4.4);
\node (v25) at (-4.6,-0.8) {};
\node (v24) at (19.4,-1.6) {};
\draw [densely dotted] (-4.4,-0.8) --(19.4,-1.6);
\end{tikzpicture}

 Figure $4$. Cut into elementary layers
\end{center}

\begin{center}
	\begin{tikzpicture}[scale=0.5]
		
		\draw [dashed] (0,2.3) rectangle (14,0);
		\draw [dashed] (0,-0.5) rectangle (14,-2.5);
		\draw [dashed] (0,-3) rectangle (14,-5);
		\draw [dashed] (0,-5.5) rectangle (14,-7.5);
		\draw [dashed] (0,-8) rectangle (14,-10.25);
		
		\draw  (2,2) rectangle (2,2) node (v1) {};
		\draw  (1.5,0) rectangle (1.5,0) node (v2) {};
		\draw  (3,0) rectangle (3,0) node (v3) {};
		\draw  (4,0) rectangle (4,0) node (v5) {};
		\draw  (5,0) rectangle (5,0) node (v6) {};
		\draw  (6,0) rectangle (6,0) node (v7) {};
		\draw  (7,0) rectangle (7,0) node (v8) {};
		\draw  (10,0) rectangle (10,0) node (v10) {};
		\draw  (11,0) rectangle (11,0) node (v11) {};
		\draw  (12,0) rectangle (12,0) node (v12) {};
		\draw  (5.5,1) rectangle (5.5,1) node (v4) {};
		\draw  (11,1) rectangle (11,1) node (v9) {};
		
		\draw  (2,2)-- (1.5,0)[postaction={decorate, decoration={markings,mark=at position .5 with {\arrow[black]{stealth}}}}];
		\draw  (2,2)-- (3,0)[postaction={decorate, decoration={markings,mark=at position .5 with {\arrow[black]{stealth}}}}];
		\draw  (5.5,1) -- (4,0)[postaction={decorate, decoration={markings,mark=at position .5 with {\arrow[black]{stealth}}}}];
		\draw  (5.5,1) -- (5,0)[postaction={decorate, decoration={markings,mark=at position .5 with {\arrow[black]{stealth}}}}];
		\draw(5.5,1) -- (6,0)[postaction={decorate, decoration={markings,mark=at position .5 with {\arrow[black]{stealth}}}}];
		\draw (5.5,1) -- (7,0)[postaction={decorate, decoration={markings,mark=at position .5 with {\arrow[black]{stealth}}}}];
		\draw (11,1) -- (10,0)[postaction={decorate, decoration={markings,mark=at position .5 with {\arrow[black]{stealth}}}}];
		\draw  (11,1)-- (11,0)[postaction={decorate, decoration={markings,mark=at position .5 with {\arrow[black]{stealth}}}}];
		\draw  (11,1)-- (12,0)[postaction={decorate, decoration={markings,mark=at position .5 with {\arrow[black]{stealth}}}}];
		\draw[fill] (v1) circle [radius=0.1];
		\draw[fill] (v4) circle [radius=0.1];
		\draw[fill] (v9) circle [radius=0.1];

		\node (v29) at (9,-2.5) {};
		\node (v31) at (10,-2.5) {};
		\node (v33) at (11,-2.5) {};
		\node (v35) at (12,-2.5) {};
		\node (v37) at (13,-2.5) {};
		\node (v14) at (1.5,-1.5) {};
		\node (v16) at (2.5,-1.5) {};
		\node (v20) at (4,-1.5) {};
		\node (v30) at (10,-1.5) {};
		\node (v36) at (13,-1.5) {};
		\node (v13) at (1.5,-0.5) {};
		\node (v19) at (3,-0.5) {};
		\node (v21) at (4,-0.5) {};
		\node (v22) at (5,-0.5) {};
		\node (v24) at (6,-0.5) {};
		\node (v26) at (7,-0.5) {};
		\node (v28) at (10,-0.5) {};
		\node (v32) at (11,-0.5) {};
		\node (v34) at (12,-0.5) {};
		\node (v15) at (1.5,-2.5) {};
		\node (v17) at (2.5,-2.5) {};
		\node (v18) at (3.5,-2.5) {};
		\node (v23) at (5,-2.5) {};
		\node (v25) at (6,-2.5) {};
		\node (v27) at (7,-2.5) {};
		\draw (1.5,-0.5) --(1.5,-1.5)[postaction={decorate, decoration={markings,mark=at position .5 with {\arrow[black]{stealth}}}}];
		\draw  (1.5,-1.5) -- (1.5,-2.5)[postaction={decorate, decoration={markings,mark=at position .5 with {\arrow[black]{stealth}}}}];
		\draw  (2.5,-1.5) --  (2.5,-2.5)[postaction={decorate, decoration={markings,mark=at position .5 with {\arrow[black]{stealth}}}}];
		\draw (2.5,-1.5) -- (3.5,-2.5)[postaction={decorate, decoration={markings,mark=at position .5 with {\arrow[black]{stealth}}}}];
		\draw  (3,-0.5)--  (4,-1.5)[postaction={decorate, decoration={markings,mark=at position .5 with {\arrow[black]{stealth}}}}];
		\draw  (4,-0.5)--  (4,-1.5)[postaction={decorate, decoration={markings,mark=at position .5 with {\arrow[black]{stealth}}}}];
		\draw (5,-0.5)--  (5,-2.5)[postaction={decorate, decoration={markings,mark=at position .5 with {\arrow[black]{stealth}}}}];
		\draw  (6,-0.5)-- (6,-2.5)[postaction={decorate, decoration={markings,mark=at position .5 with {\arrow[black]{stealth}}}}];
		\draw  (7,-0.5) -- (7,-2.5)[postaction={decorate, decoration={markings,mark=at position .5 with {\arrow[black]{stealth}}}}];
		\draw  (10,-0.5) -- (9,-2.5)[postaction={decorate, decoration={markings,mark=at position .5 with {\arrow[black]{stealth}}}}];
		\draw   (10,-1.5)-- (10,-2.5)[postaction={decorate, decoration={markings,mark=at position .5 with {\arrow[black]{stealth}}}}];
		\draw (11,-0.5)  -- (11,-2.5)[postaction={decorate, decoration={markings,mark=at position .5 with {\arrow[black]{stealth}}}}];
		\draw (12,-0.5)-- (12,-2.5)[postaction={decorate, decoration={markings,mark=at position .5 with {\arrow[black]{stealth}}}}];
		\draw  (13,-1.5) -- (13,-2.5)[postaction={decorate, decoration={markings,mark=at position .5 with {\arrow[black]{stealth}}}}];
		\draw[fill] (v14) circle [radius=0.1];
		\draw[fill] (v16) circle [radius=0.1];
		\draw[fill] (v20) circle [radius=0.1];
		\draw[fill] (v36) circle [radius=0.1];
		\draw[fill] (v30) circle [radius=0.1];
		\node (v38) at (1.5,-3) {};
		\node (v40) at (2.5,-3) {};
		\node (v42) at (3.5,-3) {};
		\node (v44) at (5,-3) {};
		\node (v47) at (6,-3) {};
		\node (v49) at (7,-3) {};
		\node (v51) at (9,-3) {};
		\node (v54) at (10,-3) {};
		\node (v56) at (11,-3) {};
		\node (v58) at (12,-3) {};
		\node (v60) at (13,-3) {};
		\node (v39) at (1.5,-5) {};
		\node (v41) at (2.5,-5) {};
		\node (v45) at (3.5,-5) {};
		\node (v46) at (4.5,-5) {};
		\node (v48) at (5.5,-5) {};
		\node (v50) at (6.5,-5) {};
		\node (v53) at (9,-5) {};
		\node (v55) at (10,-5) {};
		\node (v59) at (11,-5) {};
		\node (v61) at (13,-5) {};
		\draw  (1.5,-3) -- (1.5,-5)[postaction={decorate, decoration={markings,mark=at position .5 with {\arrow[black]{stealth}}}}];
		\draw (2.5,-3) -- (2.5,-5)[postaction={decorate, decoration={markings,mark=at position .5 with {\arrow[black]{stealth}}}}];
		\node (v43) at (4,-4) {};
		\draw  (3.5,-3) -- (4,-4)[postaction={decorate, decoration={markings,mark=at position .5 with {\arrow[black]{stealth}}}}];
		\draw  (5,-3)-- (4,-4)[postaction={decorate, decoration={markings,mark=at position .5 with {\arrow[black]{stealth}}}}];
		\draw  (4,-4)--  (3.5,-5)[postaction={decorate, decoration={markings,mark=at position .5 with {\arrow[black]{stealth}}}}];
		\draw  (4,-4) -- (4.5,-5)[postaction={decorate, decoration={markings,mark=at position .5 with {\arrow[black]{stealth}}}}];
		\draw  (6,-3) --  (5.5,-5)[postaction={decorate, decoration={markings,mark=at position .5 with {\arrow[black]{stealth}}}}];
		\draw (7,-3)-- (6.5,-5)[postaction={decorate, decoration={markings,mark=at position .5 with {\arrow[black]{stealth}}}}];
		\node (v52) at (9,-4) {};
		\draw  (9,-3) --(9,-4)[postaction={decorate, decoration={markings,mark=at position .5 with {\arrow[black]{stealth}}}}];
		\draw  (9,-4) -- (9,-5)[postaction={decorate, decoration={markings,mark=at position .5 with {\arrow[black]{stealth}}}}];
		\draw(10,-3) -- (10,-5)[postaction={decorate, decoration={markings,mark=at position .5 with {\arrow[black]{stealth}}}}];
		\node (v57) at (11.5,-4) {};
		\draw  (11,-3)--  (11.5,-4)[postaction={decorate, decoration={markings,mark=at position .5 with {\arrow[black]{stealth}}}}];
		\draw  (12,-3)--  (11.5,-4)[postaction={decorate, decoration={markings,mark=at position .5 with {\arrow[black]{stealth}}}}];
		\draw   (11.5,-4)-- (11,-5)[postaction={decorate, decoration={markings,mark=at position .5 with {\arrow[black]{stealth}}}}];
		\draw (13,-3)-- (13,-5)[postaction={decorate, decoration={markings,mark=at position .5 with {\arrow[black]{stealth}}}}];
		\draw[fill] (v43) circle [radius=0.1];
		\draw[fill] (v52) circle [radius=0.1];
		\draw[fill] (v57) circle [radius=0.1];
		\node (v62) at (1.5,-5.5) {};
		\node (v64) at (2.5,-5.5) {};
		\node (v65) at (3.5,-5.5) {};
		\node (v66) at (4.5,-5.5) {};
		\node (v68) at (5.5,-5.5) {};
		\node (v69) at (6.5,-5.5) {};
		\node (v72) at (9,-5.5) {};
		\node (v74) at (10,-5.5) {};
		\node (v76) at (11,-5.5) {};
		\node (v78) at (13,-5.5) {};
		\node (v70) at (3,-7.5) {};
		\node (v71) at (5,-7.5) {};
		\node (v73) at (9.5,-7.5) {};
		\node (v77) at (10.5,-7.5) {};
		\node (v63) at (2.5,-6.5) {};
		\node (v67) at (5.5,-6.5) {};
		\node (v75) at (10,-6.5) {};
		\node (v79) at (13,-6.5) {};
		\draw  (1.5,-5.5)-- (2.5,-6.5)[postaction={decorate, decoration={markings,mark=at position .5 with {\arrow[black]{stealth}}}}];
		\draw (2.5,-5.5) -- (2.5,-6.5)[postaction={decorate, decoration={markings,mark=at position .5 with {\arrow[black]{stealth}}}}];
		\draw  (3.5,-5.5) --(2.5,-6.5)[postaction={decorate, decoration={markings,mark=at position .5 with {\arrow[black]{stealth}}}}];
		\draw  (4.5,-5.5) -- (5.5,-6.5)[postaction={decorate, decoration={markings,mark=at position .5 with {\arrow[black]{stealth}}}}];
		\draw  (5.5,-5.5) -- (5.5,-6.5)[postaction={decorate, decoration={markings,mark=at position .5 with {\arrow[black]{stealth}}}}];
		\draw (6.5,-5.5) --(5.5,-6.5)[postaction={decorate, decoration={markings,mark=at position .5 with {\arrow[black]{stealth}}}}];
		\draw  (2.5,-6.5) --  (3,-7.5)[postaction={decorate, decoration={markings,mark=at position .5 with {\arrow[black]{stealth}}}}];
		\draw (5.5,-6.5) -- (5,-7.5)[postaction={decorate, decoration={markings,mark=at position .5 with {\arrow[black]{stealth}}}}];
		\draw  (9,-5.5)-- (9.5,-7.5)[postaction={decorate, decoration={markings,mark=at position .5 with {\arrow[black]{stealth}}}}];
		\draw  (10,-5.5) -- (10,-6.5)[postaction={decorate, decoration={markings,mark=at position .5 with {\arrow[black]{stealth}}}}];
		\draw  (11,-5.5) -- (10.5,-7.5)[postaction={decorate, decoration={markings,mark=at position .5 with {\arrow[black]{stealth}}}}];
		\draw (13,-5.5) -- (13,-6.5)[postaction={decorate, decoration={markings,mark=at position .5 with {\arrow[black]{stealth}}}}];
		\draw[fill] (v63) circle [radius=0.1];
		\draw[fill] (v67) circle [radius=0.1];
		\draw[fill] (v75) circle [radius=0.1];
		\draw[fill] (v79) circle [radius=0.1];
		\node (v80) at (3,-8) {};
		\node (v82) at (5,-8) {};
		\node (v83) at (9.5,-8) {};
		\node (v85) at (10.5,-8) {};
		\node (v81) at (4,-10) {};
		\node (v84) at (10,-10) {};
		\draw  (3,-8)-- (4,-10)[postaction={decorate, decoration={markings,mark=at position .5 with {\arrow[black]{stealth}}}}];
		\draw  (5,-8) -- (4,-10)[postaction={decorate, decoration={markings,mark=at position .5 with {\arrow[black]{stealth}}}}];
		\draw  (9.5,-8) -- (10,-10)[postaction={decorate, decoration={markings,mark=at position .5 with {\arrow[black]{stealth}}}}];
		\draw (10.5,-8) -- (10,-10)[postaction={decorate, decoration={markings,mark=at position .5 with {\arrow[black]{stealth}}}}];
		\draw[fill] (v81) circle [radius=0.1];
		\draw[fill] (v84) circle [radius=0.1];
			\end{tikzpicture}

 Figure $5$. Make each layer into an elementary progressive plane graph
\end{center}
	\begin{center}
		\begin{tikzpicture}[scale=0.5]
			
			\draw [dashed] (0,2.3) rectangle (14,0);
			\draw [dashed] (0,-0.5) rectangle (14,-2.5);
			\draw [dashed] (0,-3) rectangle (14,-5);
			\draw [dashed] (0,-5.5) rectangle (14,-7.5);
			\draw [dashed] (0,-8) rectangle (14,-10.25);
			
			\draw  (2,2) rectangle (2,2) node (v1) {};
			\draw  (1.5,0) rectangle (1.5,0) node (v2) {};
			\draw  (3,0) rectangle (3,0) node (v3) {};
			\draw  (4,0) rectangle (4,0) node (v5) {};
			\draw  (5,0) rectangle (5,0) node (v6) {};
			\draw  (6,0) rectangle (6,0) node (v7) {};
			\draw  (7,0) rectangle (7,0) node (v8) {};
			\draw  (10,0) rectangle (10,0) node (v10) {};
			\draw  (11,0) rectangle (11,0) node (v11) {};
			\draw  (12,0) rectangle (12,0) node (v12) {};
			\draw  (5.5,1) rectangle (5.5,1) node (v4) {};
			\draw  (11,1) rectangle (11,1) node (v9) {};
			
			\draw  (2,2)-- (1.5,0)[postaction={decorate, decoration={markings,mark=at position .5 with {\arrow[black]{stealth}}}}];
			\draw  (2,2)-- (3,0)[postaction={decorate, decoration={markings,mark=at position .5 with {\arrow[black]{stealth}}}}];
			\draw  (5.5,1) -- (4,0)[postaction={decorate, decoration={markings,mark=at position .5 with {\arrow[black]{stealth}}}}];
			\draw  (5.5,1) -- (5,0)[postaction={decorate, decoration={markings,mark=at position .5 with {\arrow[black]{stealth}}}}];
			\draw(5.5,1) -- (6,0)[postaction={decorate, decoration={markings,mark=at position .5 with {\arrow[black]{stealth}}}}];
			\draw (5.5,1) -- (7,0)[postaction={decorate, decoration={markings,mark=at position .5 with {\arrow[black]{stealth}}}}];
			\draw (11,1) -- (10,0)[postaction={decorate, decoration={markings,mark=at position .5 with {\arrow[black]{stealth}}}}];
			\draw  (11,1)-- (11,0)[postaction={decorate, decoration={markings,mark=at position .5 with {\arrow[black]{stealth}}}}];
			\draw  (11,1)-- (12,0)[postaction={decorate, decoration={markings,mark=at position .5 with {\arrow[black]{stealth}}}}];
			\draw[fill] (v1) circle [radius=0.1];
			\draw[fill] (v4) circle [radius=0.1];
			\draw[fill] (v9) circle [radius=0.1];

			\node (v29) at (9,-2.5) {};
			\node (v31) at (10,-2.5) {};
			\node (v33) at (11,-2.5) {};
			\node (v35) at (12,-2.5) {};
			\node (v37) at (13,-2.5) {};
			\node (v14) at (1.5,-1.5) {};
			\node (v16) at (2.5,-1.5) {};
			\node (v20) at (4,-1.5) {};
			\node (v30) at (10,-1.5) {};
			\node (v36) at (13,-1.5) {};
			\node (v13) at (1.5,-0.5) {};
			\node (v19) at (3,-0.5) {};
			\node (v21) at (4,-0.5) {};
			\node (v22) at (5,-0.5) {};
			\node (v24) at (6,-0.5) {};
			\node (v26) at (7,-0.5) {};
			\node (v28) at (10,-0.5) {};
			\node (v32) at (11,-0.5) {};
			\node (v34) at (12,-0.5) {};
			\node (v15) at (1.5,-2.5) {};
			\node (v17) at (2.5,-2.5) {};
			\node (v18) at (3.5,-2.5) {};
			\node (v23) at (5,-2.5) {};
			\node (v25) at (6,-2.5) {};
			\node (v27) at (7,-2.5) {};
			\draw (1.5,-0.5) --(1.5,-1.5)[postaction={decorate, decoration={markings,mark=at position .5 with {\arrow[black]{stealth}}}}];
			\draw  (1.5,-1.5) -- (1.5,-2.5)[postaction={decorate, decoration={markings,mark=at position .5 with {\arrow[black]{stealth}}}}];
			\draw  (2.5,-1.5) --  (2.5,-2.5)[postaction={decorate, decoration={markings,mark=at position .5 with {\arrow[black]{stealth}}}}];
			\draw (2.5,-1.5) -- (3.5,-2.5)[postaction={decorate, decoration={markings,mark=at position .5 with {\arrow[black]{stealth}}}}];
			\draw  (3,-0.5)--  (4,-1.5)[postaction={decorate, decoration={markings,mark=at position .5 with {\arrow[black]{stealth}}}}];
			\draw  (4,-0.5)--  (4,-1.5)[postaction={decorate, decoration={markings,mark=at position .5 with {\arrow[black]{stealth}}}}];
			\draw (5,-0.5)--  (5,-2.5)[postaction={decorate, decoration={markings,mark=at position .5 with {\arrow[black]{stealth}}}}];
			\draw  (6,-0.5)-- (6,-2.5)[postaction={decorate, decoration={markings,mark=at position .5 with {\arrow[black]{stealth}}}}];
			\draw  (7,-0.5) -- (7,-2.5)[postaction={decorate, decoration={markings,mark=at position .5 with {\arrow[black]{stealth}}}}];
			\draw  (10,-0.5) -- (9,-2.5)[postaction={decorate, decoration={markings,mark=at position .5 with {\arrow[black]{stealth}}}}];
			\draw   (10,-1.5)-- (10,-2.5)[postaction={decorate, decoration={markings,mark=at position .5 with {\arrow[black]{stealth}}}}];
			\draw (11,-0.5)  -- (11,-2.5)[postaction={decorate, decoration={markings,mark=at position .5 with {\arrow[black]{stealth}}}}];
			\draw (12,-0.5)-- (12,-2.5)[postaction={decorate, decoration={markings,mark=at position .5 with {\arrow[black]{stealth}}}}];
			\draw  (13,-1.5) -- (13,-2.5)[postaction={decorate, decoration={markings,mark=at position .5 with {\arrow[black]{stealth}}}}];
			\draw[fill] (v14) circle [radius=0.1];
			\draw[fill] (v16) circle [radius=0.1];
			\draw[fill] (v20) circle [radius=0.1];
			\draw[fill] (v36) circle [radius=0.1];
			\draw[fill] (v30) circle [radius=0.1];
			\node (v38) at (1.5,-3) {};
			\node (v40) at (2.5,-3) {};
			\node (v42) at (3.5,-3) {};
			\node (v44) at (5,-3) {};
			\node (v47) at (6,-3) {};
			\node (v49) at (7,-3) {};
			\node (v51) at (9,-3) {};
			\node (v54) at (10,-3) {};
			\node (v56) at (11,-3) {};
			\node (v58) at (12,-3) {};
			\node (v60) at (13,-3) {};
			\node (v39) at (1.5,-5) {};
			\node (v41) at (2.5,-5) {};
			\node (v45) at (3.5,-5) {};
			\node (v46) at (4.5,-5) {};
			\node (v48) at (5.5,-5) {};
			\node (v50) at (6.5,-5) {};
			\node (v53) at (9,-5) {};
			\node (v55) at (10,-5) {};
			\node (v59) at (11,-5) {};
			\node (v61) at (13,-5) {};
			\draw  (1.5,-3) -- (1.5,-5)[postaction={decorate, decoration={markings,mark=at position .5 with {\arrow[black]{stealth}}}}];
			\draw (2.5,-3) -- (2.5,-5)[postaction={decorate, decoration={markings,mark=at position .5 with {\arrow[black]{stealth}}}}];
			\node (v43) at (4,-4) {};
			\draw  (3.5,-3) -- (4,-4)[postaction={decorate, decoration={markings,mark=at position .5 with {\arrow[black]{stealth}}}}];
			\draw  (5,-3)-- (4,-4)[postaction={decorate, decoration={markings,mark=at position .5 with {\arrow[black]{stealth}}}}];
			\draw  (4,-4)--  (3.5,-5)[postaction={decorate, decoration={markings,mark=at position .5 with {\arrow[black]{stealth}}}}];
			\draw  (4,-4) -- (4.5,-5)[postaction={decorate, decoration={markings,mark=at position .5 with {\arrow[black]{stealth}}}}];
			\draw  (6,-3) --  (5.5,-5)[postaction={decorate, decoration={markings,mark=at position .5 with {\arrow[black]{stealth}}}}];
			\draw (7,-3)-- (6.5,-5)[postaction={decorate, decoration={markings,mark=at position .5 with {\arrow[black]{stealth}}}}];
			\node (v52) at (9,-4) {};
			\draw  (9,-3) --(9,-4)[postaction={decorate, decoration={markings,mark=at position .5 with {\arrow[black]{stealth}}}}];
			\draw  (9,-4) -- (9,-5)[postaction={decorate, decoration={markings,mark=at position .5 with {\arrow[black]{stealth}}}}];
			\draw(10,-3) -- (10,-5)[postaction={decorate, decoration={markings,mark=at position .5 with {\arrow[black]{stealth}}}}];
			\node (v57) at (11.5,-4) {};
			\draw  (11,-3)--  (11.5,-4)[postaction={decorate, decoration={markings,mark=at position .5 with {\arrow[black]{stealth}}}}];
			\draw  (12,-3)--  (11.5,-4)[postaction={decorate, decoration={markings,mark=at position .5 with {\arrow[black]{stealth}}}}];
			\draw   (11.5,-4)-- (11,-5)[postaction={decorate, decoration={markings,mark=at position .5 with {\arrow[black]{stealth}}}}];
			\draw (13,-3)-- (13,-5)[postaction={decorate, decoration={markings,mark=at position .5 with {\arrow[black]{stealth}}}}];
			\draw[fill] (v43) circle [radius=0.1];
			\draw[fill] (v52) circle [radius=0.1];
			\draw[fill] (v57) circle [radius=0.1];
			\node (v62) at (1.5,-5.5) {};
			\node (v64) at (2.5,-5.5) {};
			\node (v65) at (3.5,-5.5) {};
			\node (v66) at (4.5,-5.5) {};
			\node (v68) at (5.5,-5.5) {};
			\node (v69) at (6.5,-5.5) {};
			\node (v72) at (9,-5.5) {};
			\node (v74) at (10,-5.5) {};
			\node (v76) at (11,-5.5) {};
			\node (v78) at (13,-5.5) {};
			\node (v70) at (3,-7.5) {};
			\node (v71) at (5,-7.5) {};
			\node (v73) at (9.5,-7.5) {};
			\node (v77) at (10.5,-7.5) {};
			\node (v63) at (2.5,-6.5) {};
			\node (v67) at (5.5,-6.5) {};
			\node (v75) at (10,-6.5) {};
			\node (v79) at (13,-6.5) {};
			\draw  (1.5,-5.5)-- (2.5,-6.5)[postaction={decorate, decoration={markings,mark=at position .5 with {\arrow[black]{stealth}}}}];
			\draw (2.5,-5.5) -- (2.5,-6.5)[postaction={decorate, decoration={markings,mark=at position .5 with {\arrow[black]{stealth}}}}];
			\draw  (3.5,-5.5) --(2.5,-6.5)[postaction={decorate, decoration={markings,mark=at position .5 with {\arrow[black]{stealth}}}}];
			\draw  (4.5,-5.5) -- (5.5,-6.5)[postaction={decorate, decoration={markings,mark=at position .5 with {\arrow[black]{stealth}}}}];
			\draw  (5.5,-5.5) -- (5.5,-6.5)[postaction={decorate, decoration={markings,mark=at position .5 with {\arrow[black]{stealth}}}}];
			\draw (6.5,-5.5) --(5.5,-6.5)[postaction={decorate, decoration={markings,mark=at position .5 with {\arrow[black]{stealth}}}}];
			\draw  (2.5,-6.5) --  (3,-7.5)[postaction={decorate, decoration={markings,mark=at position .5 with {\arrow[black]{stealth}}}}];
			\draw (5.5,-6.5) -- (5,-7.5)[postaction={decorate, decoration={markings,mark=at position .5 with {\arrow[black]{stealth}}}}];
			\draw  (9,-5.5)-- (9.5,-7.5)[postaction={decorate, decoration={markings,mark=at position .5 with {\arrow[black]{stealth}}}}];
			\draw  (10,-5.5) -- (10,-6.5)[postaction={decorate, decoration={markings,mark=at position .5 with {\arrow[black]{stealth}}}}];
			\draw  (11,-5.5) -- (10.5,-7.5)[postaction={decorate, decoration={markings,mark=at position .5 with {\arrow[black]{stealth}}}}];
			\draw (13,-5.5) -- (13,-6.5)[postaction={decorate, decoration={markings,mark=at position .5 with {\arrow[black]{stealth}}}}];
			\draw[fill] (v63) circle [radius=0.1];
			\draw[fill] (v67) circle [radius=0.1];
			\draw[fill] (v75) circle [radius=0.1];
			\draw[fill] (v79) circle [radius=0.1];
			\node (v80) at (3,-8) {};
			\node (v82) at (5,-8) {};
			\node (v83) at (9.5,-8) {};
			\node (v85) at (10.5,-8) {};
			\node (v81) at (4,-10) {};
			\node (v84) at (10,-10) {};
			\draw  (3,-8)-- (4,-10)[postaction={decorate, decoration={markings,mark=at position .5 with {\arrow[black]{stealth}}}}];
			\draw  (5,-8) -- (4,-10)[postaction={decorate, decoration={markings,mark=at position .5 with {\arrow[black]{stealth}}}}];
			\draw  (9.5,-8) -- (10,-10)[postaction={decorate, decoration={markings,mark=at position .5 with {\arrow[black]{stealth}}}}];
			\draw (10.5,-8) -- (10,-10)[postaction={decorate, decoration={markings,mark=at position .5 with {\arrow[black]{stealth}}}}];
			\draw[fill] (v81) circle [radius=0.1];
			\draw[fill] (v84) circle [radius=0.1];
			       \node[scale=0.5] at (1.5,0.8) {$1$};
					\node[scale=0.5] at (2.5,0.6) {$2$};
					\node[scale=0.5] at (4.3,0.4) {$3$};
					\node [scale=0.5]at (4.9,0.2) {$4$};
					\node [scale=0.5]at (5.5,0.3) {$5$};
					\node [scale=0.5]at (6.6,0.5) {$6$};
					\node [scale=0.5]at (10,0.3) {$7$};
					\node[scale=0.5] at (10.8,0.2) {$8$};
					\node[scale=0.5] at (11.8,0.5) {$9$};

					\node[scale=0.5] at (1.3,-0.9) {$1$};
					\node[scale=0.5] at (1.3,-2.1) {$2$};
					\node [scale=0.5]at (2.3,-2.2) {$3$};
					\node[scale=0.5] at (3.3,-1.9) {$4$};
					\node[scale=0.5] at (3.2,-1) {$5$};
					\node[scale=0.5] at (4.2,-0.9) {$6$};
					\node[scale=0.5] at (4.8,-1.5) {$7$};
					\node [scale=0.5]at (5.8,-1.5) {$8$};
					\node[scale=0.5] at (6.8,-1.5) {$9$};
					\node[scale=0.5] at (9,-1.9) {$10$};
					\node [scale=0.5]at (9.7,-2.2) {$11$};
					\node[scale=0.5] at (10.7,-1.7) {$12$};
					\node[scale=0.5] at (11.8,-1.7) {$13$};
					\node[scale=0.5] at (12.8,-2.1) {$14$};
					
					\node[scale=0.5] at (1.2,-4.1) {$1$};
					\node [scale=0.5]at (2.2,-4) {$2$};
					\node [scale=0.5]at (3.4,-3.6) {$3$};
					\node[scale=0.5] at (4.8,-3.6) {$4$};
					\node [scale=0.5]at (3.4,-4.7) {$5$};
					\node[scale=0.5] at (4.6,-4.6) {$6$};
					\node[scale=0.5] at (5.5,-4.1) {$7$};
					\node [scale=0.5]at (6.6,-4.1) {$8$};
					\node [scale=0.5]at (8.7,-3.5) {$9$};
					\node [scale=0.5]at (8.7,-4.6) {$10$};
					\node [scale=0.5]at (9.8,-4.1) {$11$};
					\node [scale=0.5]at (10.9,-3.5) {$12$};
					\node [scale=0.5]at (12.1,-3.5) {$13$};
					\node [scale=0.5]at (10.9,-4.5) {$14$};
					\node [scale=0.5]at (13.2,-4.1) {$15$};

					\node [scale=0.5] at (1.7,-5.9) {$1$};
					\node [scale=0.5] at (2.3,-5.8) {$2$};
					\node [scale=0.5] at (2.9,-5.8) {$3$};
					\node  [scale=0.5]at (2.6,-7.2) {$4$};
					\node  [scale=0.5]at (4.6,-6.1) {$5$};
					\node  [scale=0.5]at (5.3,-5.9) {$6$};
					\node  [scale=0.5]at (6,-5.7) {$7$};
					\node  [scale=0.5]at (5.6,-7.1) {$8$};
					\node  [scale=0.5]at (8.9,-6.2) {$9$};
					\node [scale=0.5] at (9.6,-6) {$10$};
					\node  [scale=0.5]at (10.6,-6) {$11$};
					\node  [scale=0.5]at (12.6,-6) {$12$};
					
					\node[scale=0.5] at (3.1,-9.1) {$1$};
					\node[scale=0.5] at (4.9,-9.1) {$2$};
					\node [scale=0.5]at (9.4,-9) {$3$};
					\node[scale=0.5] at (10.6,-8.8) {$4$};
		\end{tikzpicture}

 Figure $6$. Order edges linearly from left to right and up to down
\end{center}

The crux of this strategy is to define the composition rule and show that the composition of two upward planar orders is still an upward planar order. Similar to the composition theory for planar orders \cite{[HLY19]}, the composition rule is also of shuffle style and associative. The main result of this paper is Theorem \ref{main}.

The paper is organized as follows. In Section $2$, we recall the notion of an upward planar order. In Section $3$, we recall the notion of a progressive graph and introduce the notion of admissible condition for upward planar orders on a progressive graph. In Section $4$, we introduce the composition rule of upward planar orders. Section $5$ is devoted to the proof of the main result.

\section{Upward planar order}
We start by introducing some notations. For a finite linearly ordered set $(S,\leq)$, and any subset $X\subseteq S$, we denote the minimal element $min\ X$ and the maximal element $max\ X$ of $X$ by $X^-$ and $X^+$, respectively. The convex hull of $X$ in $S$ is the set $\overline{X}=\{y\in S|X^-\leq y\leq X^+\}$, which is the smallest closed interval containing $X$.

For an acyclic directed graph $G$, the edge set and vertex set are  denoted by $E(G)$ and $V(G)$, respectively. The notation $e_1\to e_2$ means that there is a directed path starting from edge $e_1$ and ending with edge $e_2$, which, by the acyclicity of $G$, defines a partial order $\to$ on $E(G)$ and is called the \textbf{reachable order} of $G$. A vertex $v$ is called \textbf{processive} \cite{[HLY19]} if it is neither a source nor a sink, or in other words, both $I(v)$ and $O(v)$ are nonempty.

The notion of an upward planar order was introduced in \cite{[LY19]} to combinatorially characterize an upward plane graph.
\begin{defn}\label{upo}
An \textbf{upward planar order} on an acyclic directed graph $G$ is a linear order $\preceq$ on $E(G)$, such that

$(U_1)$ for edges $e_1, e_2$, $e_1\rightarrow e_2$ implies that $e_1\prec e_2$;

$(U_2)$ for any vertex $v$, $\overline{I(v)}\cap \overline{O(v)}=\emptyset$ and $\overline{E(v)}=\overline{I(v)}\sqcup \overline{O(v)}$;

$(U_3)$ for any two  vertices $v_1$ and $v_2$, $I(v_1)\cap \overline{I(v_2)}\neq\emptyset$ implies that $\overline{I(v_1)}\subseteq \overline{I(v_2)}$, and $O(v_1)\cap \overline{O(v_2)}\neq\emptyset$ implies that $\overline{O(v_1)}\subseteq \overline{O(v_2)}$.
\end{defn}
The $(U_1)$ condition says that $\preceq$ is a linear extension of the reachable order $\to$. The $(U_2)$ condition, together with $(U_1)$, implies that for any processive vertex $v$, $O(v)^-=I(v)^++1$, which means that the minmal outgoing edge $O(v)^-$  is just right next to the maximal incoming edge $I(v)^+$ in the linear list of $E(G)$ with respect to $\preceq$. For a source or a sink, $(U_2)$ is unconditionally satisfied. Notice that for any two subsets $X,Y$ of a linearly ordered set, $X$ $\subseteq$ $\overline{Y}$ is equivalent to $\overline{X}$  $\subseteq$ $\overline{Y}$, thus the condition $(U_3$) can be restated as follows. For any two vertices $v_1$, $v_2$, $I(v_1)\cap \overline{I(v_2)}\neq\emptyset$ implies that $I(v_1)\subseteq \overline{I(v_2)}$, and $O(v_1)\cap \overline{O(v_2)}\neq\emptyset$ implies that $O(v_1)\subseteq \overline{O(v_2)}$.

\begin{ex}\label{type}
The following is a typical example of progressive plane graph that motivates the Definition \ref{upo}, where  the vertices drawn on the box boundary are omitted for convenience. The natural convention of edge-ordering is first from left to right and then from up to down.
\begin{center}
	\begin{tikzpicture}[scale=0.57]
		
		\draw[dashed]  (-4,2.5) rectangle (13,-1.5);
		\node [scale=0.8, above] (v1) at (-2.5,2.5) {$1$};
		\node [scale=0.8,below] (v2) at (-2.5,-1.5) {};
		\node (v3) at (1,0.5) {};
		\draw [fill](v3) circle [radius=0.09];
		\node [scale=0.8,below](v4) at (-0.2,-1.5) {$3$};
		\node [scale=0.8,above](v5) at (2.5,2.5) {$5$};
		\node[scale=0.8,above] (v7) at (4,2.5) {$6$};
		\node (v6) at (3,0.5) {};
		\draw [fill](v6) circle [radius=0.09];
		\node [scale=0.8,below](v8) at (3,-1.5) {$7$};
		\node (v9) at (6.5,1.5) {};
		\draw [fill](v9) circle [radius=0.09];
		\node [scale=0.8,below](v10) at (4.5,-1.5) {$8$};
		\node [scale=0.8,below](v11) at (5.5,-1.5) {$9$};
		\node [scale=0.8,below](v13) at (6.2,-1.5) {$10$};
		\node[scale=0.8,below] (v14) at (7.2,-1.5) {$11$};
		\node[scale=0.8,below] (v15) at (8,-1.5) {$12$};
		\node [scale=0.8,above](v16) at (9,2.5) {$13$};
		\node [scale=0.8,above](v23) at (10.6,2.5) {$15$};
		\node [scale=0.8,above](v18) at (11.5,2.5) {$16$};
		\node (v17) at (10.5,0.5) {};
		\draw [fill](v17) circle [radius=0.09];
		\node (v12) at (6.5,0) {};
		\draw [fill](v12) circle [radius=0.09];
		
		\draw  (v1) -- (v2)[postaction={decorate, decoration={markings,mark=at position .5 with {\arrow[black]{stealth}}}}];
		\draw  (-0.2,0.5) -- (v4)[postaction={decorate, decoration={markings,mark=at position .5 with {\arrow[black]{stealth}}}}];
		\draw  (v5) -- (3,0.5)[postaction={decorate, decoration={markings,mark=at position .5 with {\arrow[black]{stealth}}}}];
		\draw  (v7) -- (3,0.5)[postaction={decorate, decoration={markings,mark=at position .5 with {\arrow[black]{stealth}}}}];
		\draw  (3,0.5)-- (v8)[postaction={decorate, decoration={markings,mark=at position .5 with {\arrow[black]{stealth}}}}];
		\draw  (6.5,1.5)-- (4.5,-1.5)[postaction={decorate, decoration={markings,mark=at position .5 with {\arrow[black]{stealth}}}}];
		\draw  (6.5,1.5) -- (v11)[postaction={decorate, decoration={markings,mark=at position .5 with {\arrow[black]{stealth}}}}];
		\draw (6.5,0) -- (v13)[postaction={decorate, decoration={markings,mark=at position .5 with {\arrow[black]{stealth}}}}];
		\draw (6.5,0) -- (v14)[postaction={decorate, decoration={markings,mark=at position .5 with {\arrow[black]{stealth}}}}];
		\draw  (6.5,1.5) -- (v15)[postaction={decorate, decoration={markings,mark=at position .5 with {\arrow[black]{stealth}}}}];
		\node [scale=0.8,below](v19) at (9,-1.5) {$17$};
		\node (v22) at (10.5,1.5) {};
		\draw [fill](v22) circle [radius=0.09];
		\node [scale=0.8,above](v21) at (9.8,2.5) {$14$};
		\draw  (v16) -- (10.5,0.5)[postaction={decorate, decoration={markings,mark=at position .5 with {\arrow[black]{stealth}}}}];
		\draw  (v18) -- (10.5,0.5)[postaction={decorate, decoration={markings,mark=at position .5 with {\arrow[black]{stealth}}}}];
		\draw  ((10.5,0.5) -- (v19)[postaction={decorate, decoration={markings,mark=at position .5 with {\arrow[black]{stealth}}}}];
		\node [scale=0.8,below](v20) at (12,-1.5) {$21$};
		\draw (10.5,0.5) --(v20)[postaction={decorate, decoration={markings,mark=at position .5 with {\arrow[black]{stealth}}}}];
		\draw  (v21) -- (10.5,1.5)[postaction={decorate, decoration={markings,mark=at position .5 with {\arrow[black]{stealth}}}}];
		\draw  (v23) -- (10.5,1.5)[postaction={decorate, decoration={markings,mark=at position .5 with {\arrow[black]{stealth}}}}];
		\node (v24) at (10.5,-0.5) {};
		\draw [fill](v24) circle [radius=0.09];
		\node[scale=0.8,below] (v25) at (9.8,-1.5) {$18$};
		\node [scale=0.8,below](v26) at (10.5,-1.5) {$19$};
		\node [scale=0.8,below](v27) at (11.2,-1.5) {$20$};
		\draw (10.5,-0.5) -- (v25)[postaction={decorate, decoration={markings,mark=at position .65 with {\arrow[black]{stealth}}}}];
		\draw  (10.5,-0.5) -- (v26)[postaction={decorate, decoration={markings,mark=at position .65 with {\arrow[black]{stealth}}}}];
		\draw  (10.5,-0.5) -- (v27)[postaction={decorate, decoration={markings,mark=at position .65 with {\arrow[black]{stealth}}}}];
		
		\draw[dashed]  (-4,2.5) rectangle (13,-1.5);
		\node [scale=0.8, above] (v1) at (-2.5,2.5) {$1$};
		\node [scale=0.8,below] (v2) at (-2.5,-1.5) {};
		\node (v3) at (-0.2,0.5) {};
		\draw [fill](v3) circle [radius=0.09];
		\node [scale=0.8,below](v4) at (1,-1.5) {};
		\node [scale=0.8,above](v5) at (2.5,2.5) {};
		\node[scale=0.8,above] (v7) at (4,2.5) {};
		\node (v6) at (3,0.5) {};
		\draw [fill](v6) circle [radius=0.09];
		\node [scale=0.8,below](v8) at (3,-1.5) {};
		\node (v9) at (6.5,1.5) {};
		\draw [fill](v9) circle [radius=0.09];
		
		\node (v17) at (10.5,0.5) {};
		\draw [fill](v17) circle [radius=0.09];
		\node (v12) at (6.5,0) {};
		\draw [fill](v12) circle [radius=0.09];
		
		\node [scale=0.8,above](v40) at (-1.2,2.5) {$2$};
		\node (v41) at (-1.2,0.5) {};
		\draw [fill](v41) circle [radius=0.09];
		\draw (-1.2,2.5) -- (-1.2,0.5)[postaction={decorate, decoration={markings,mark=at position .5 with {\arrow[black]{stealth}}}}];
		
		\node [scale=0.8,above](v50) at (1,2.5) {$4$};
		\node (v51) at (1,0.5) {};
		\draw [fill](v51) circle [radius=0.09];
		\draw (1,2.5) -- (1,0.5)[postaction={decorate, decoration={markings,mark=at position .5 with {\arrow[black]{stealth}}}}];
	\end{tikzpicture}
\end{center}
\end{ex}

The notion of an upward planar order is a natural generalization of that of a planar order \cite{[HLY19]}, which was introduced  to characterize an equivalence class of \textbf{processive} plane graphs (or plane $st$ graphs). An acyclic directed graph together with an upward planar order is called an \textbf{upward planarly ordered graph}, or \textbf{UPO-graph} for short. A UPO-graph is a combinatorial version of an upward plane graph.

 The following is a new definition of upward planar order, whose form is much more similar to that of planar order.
\begin{defn}\label{upo1}
An \textbf{upward planar order} on acyclic directed graph $G$ is a linear order $\prec$ on the edge set $E(G)$, such that

$(Q_1)$ $e_1\rightarrow e_2$ implies that $e_1\prec e_2$;

$(Q_2)$ if $e_1\prec e\prec e_2$ and $e_1$, $e_2$ are adjacent, then
\begin{equation*}
\begin{cases}
I(t(e))\subseteq \overline{I(v)},&  \text{if}\ t(e_1)=t(e_2)=v;\\
O(s(e))\subseteq \overline{O(v)},&  \text{if}\ s(e_1)=s(e_2)=v;\\
\text{either}\ I(t(e))\subseteq \overline{I(v)}\  \text{or}\ O(s(e))\subseteq \overline{O(v)},&  \text{if}\ t(e_1)=s(e_2)=v.
\end{cases}
\end{equation*}
\end{defn}

The four possible configurations of $e_1\prec e\prec e_2$ with $e_1, e_2$ adjacent are shown as follows.
\begin{center}
\begin{tikzpicture}[scale=0.5]
\node (v2) at (-4.5,1.5) {};
\node (v1) at (-6,3.5) {};
\node (v3) at (-3,3.5) {};
\draw  (v1) -- (-4.5,1.5)[postaction={decorate, decoration={markings,mark=at position .5 with {\arrow[black]{stealth}}}}];
\draw  (v3) --(-4.5,1.5)[postaction={decorate, decoration={markings,mark=at position .5 with {\arrow[black]{stealth}}}}];
\node (v5) at (-4.5,2.5) {};
\node (v4) at (-4.5,4.5) {};
\draw (-4.5,4.5)-- (-4.5,2.5)[postaction={decorate, decoration={markings,mark=at position .5 with {\arrow[black]{stealth}}}}];
\draw[fill] (v2) circle [radius=0.1];
\draw[fill] (v5) circle [radius=0.1];
\node at (-6,2.5) {$e_1$};
\node at (-3,2.5) {$e_2$};
\node at (-4,4) {$e$};
\node (v7) at (3,2.5) {};
\node (v6) at (2,5) {};
\node (v8) at (4,0) {};
\draw  (2,5) --(3,2.5)[postaction={decorate, decoration={markings,mark=at position .5 with {\arrow[black]{stealth}}}}];
\draw  (3,2.5)-- (4,0)[postaction={decorate, decoration={markings,mark=at position .5 with {\arrow[black]{stealth}}}}];
\node (v9) at (4.5,5) {};
\draw  (4.5,5)-- (3,2.5)[postaction={decorate, decoration={markings,mark=at position .5 with {\arrow[black]{stealth}}}}];
\draw[fill] (v7) circle [radius=0.1];
\node (v11) at (3,3.5) {};
\node (v10) at (3.5,5.5) {};
\draw  (3.5,5.5) --(3,3.5)[postaction={decorate, decoration={markings,mark=at position .5 with {\arrow[black]{stealth}}}}];
\draw[fill] (v11) circle [radius=0.1];
\node (v13) at (9.5,2.5) {};
\node (v12) at (8,5) {};
\node (v14) at (10.5,0) {};
\draw  (8,5) -- (9.5,2.5)[postaction={decorate, decoration={markings,mark=at position .5 with {\arrow[black]{stealth}}}}];
\draw  (9.5,2.5) -- (10.5,0)[postaction={decorate, decoration={markings,mark=at position .5 with {\arrow[black]{stealth}}}}];
\draw[fill] (v13) circle [radius=0.11];
\node (v15) at (8,0) {};
\draw  (9.5,2.5) -- (8,0)[postaction={decorate, decoration={markings,mark=at position .5 with {\arrow[black]{stealth}}}}];
\node (v16) at (9.5,1.5) {};
\node (v17) at (9,-1) {};
\draw  (9.5,1.5)-- (9,-0.5)[postaction={decorate, decoration={markings,mark=at position .5 with {\arrow[black]{stealth}}}}];
\draw[fill] (v16) circle [radius=0.1];
\node (v18) at (16,3) {};
\node (v19) at (14.5,0.5) {};
\node (v20) at (17.5,0.5) {};
\node (v21) at (16,2) {};
\node (v22) at (16,-1) {};
\draw  (16,3) -- (14.5,0.5)[postaction={decorate, decoration={markings,mark=at position .5 with {\arrow[black]{stealth}}}}];
\draw (16,3) --(17.5,0.5)[postaction={decorate, decoration={markings,mark=at position .5 with {\arrow[black]{stealth}}}}];
\draw  (16,2) -- (16,-0.5)[postaction={decorate, decoration={markings,mark=at position .5 with {\arrow[black]{stealth}}}}];
\draw[fill] (v21) circle [radius=0.1];
\draw[fill] (v18) circle [radius=0.1];
\node[scale=0.8] at (2,4) {$e_1$};
\node[scale=0.8] at (4.1,1) {$e_2$};
\node[scale=0.8] at (3,5) {$e$};
\node[scale=0.8] at (8,4) {$e_1$};
\node[scale=0.8] at (10.6,1) {$e_2$};
\node[scale=0.8] at (9.5,-0.5) {$e$};
\node[scale=0.8] at (14.5,1.5) {$e_1$};
\node[scale=0.8] at (17.5,1.5) {$e_2$};
\node[scale=0.8] at (16.5,0.5) {$e$};
\node[scale=0.8] at (2.5,2.2) {$v$};
\node [scale=0.8]at (10,2.6) {$v$};
\node[scale=0.8] at (16,3.5) {$v$};
\node [scale=0.8]at (-4.5,1) {$v$};
\node[scale=0.8] at (-4.5,-2) {$I(t(e))\subseteq \overline{I(v)}$};
\node [scale=0.8]at (3,-2) {$I(t(e))\subseteq \overline{I(v)}$};
\node [scale=0.8]at (9.5,-2) {$O(s(e))\subseteq \overline{O(v)}$};
\node [scale=0.8]at (16,-2) {$O(s(e))\subseteq \overline{O(v)}$};
\end{tikzpicture}
\end{center}

$(Q_1)=(U_1)$ is the linear extension condition. $(Q_2)$ is called  the \textbf{nesting condition}, which is an equivalent formulation of $(U_2)$ and $(U_3)$.

\begin{thm}\label{eq}
Definition \ref{upo} and Definition \ref{upo1} are equivalent.
\end{thm}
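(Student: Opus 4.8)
The plan is to notice first that $(U_1)$ and $(Q_1)$ are verbatim the same linear-extension condition, so the entire content of the theorem is the equivalence of $(U_2)\wedge(U_3)$ with $(Q_2)$ in the presence of this shared hypothesis. I would therefore prove the two implications $(U_2)\wedge(U_3)\Rightarrow(Q_2)$ and $(Q_1)\wedge(Q_2)\Rightarrow(U_2)\wedge(U_3)$ separately, keeping $(Q_1)=(U_1)$ available throughout. Two elementary remarks would be recorded once and reused everywhere: for any edge $e$ one has $e\in I(t(e))$ and $e\in O(s(e))$; and for any vertex $v$ every $i\in I(v)$ reaches every $o\in O(v)$ through $v$, so $(Q_1)$ forces $i\prec o$, whence $I(v)^+\prec O(v)^-$ whenever both sides are nonempty.

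For the forward direction I would take $e_1\prec e\prec e_2$ with $e_1,e_2$ adjacent at $v$ and treat the three configurations. When $e_1,e_2\in I(v)$ (resp. $e_1,e_2\in O(v)$), convexity places $e$ in $\overline{I(v)}$ (resp. $\overline{O(v)}$); then $e\in I(t(e))\cap\overline{I(v)}$, so $(U_3)$ gives $\overline{I(t(e))}\subseteq\overline{I(v)}$, which is exactly the required inclusion, and the outgoing case is symmetric. In the mixed case $t(e_1)=s(e_2)=v$ I would invoke $(U_2)$: since $e$ lies between two elements of $E(v)$, it lies in $\overline{E(v)}=\overline{I(v)}\sqcup\overline{O(v)}$, hence falls into exactly one block, and the same $(U_3)$ step then yields $I(t(e))\subseteq\overline{I(v)}$ or $O(s(e))\subseteq\overline{O(v)}$, matching the disjunction demanded by $(Q_2)$.

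For the reverse direction, $(U_3)$ comes out almost immediately: if $I(v_1)\cap\overline{I(v_2)}\ne\emptyset$, choose a witness $f\in I(v_1)$ with $I(v_2)^-\preceq f\preceq I(v_2)^+$; either $f\in I(v_2)$, which forces $v_1=t(f)=v_2$ and makes the inclusion trivial, or $f$ lies strictly between the adjacent edges $I(v_2)^-,I(v_2)^+\in I(v_2)$, in which case $(Q_2)$ case one gives $I(v_1)=I(t(f))\subseteq\overline{I(v_2)}$; the outgoing half is symmetric. The disjointness half of $(U_2)$ is just the preliminary inequality $I(v)^+\prec O(v)^-$. The single genuinely substantive step, which I expect to be the main obstacle, is the ``no gap'' half of $(U_2)$: that no edge lies strictly between $I(v)^+$ and $O(v)^-$. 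I would argue by contradiction; such an $e$ sits between the adjacent pair $e_1=I(v)^+\in I(v)$ and $e_2=O(v)^-\in O(v)$, so $(Q_2)$ case three applies and gives $I(t(e))\subseteq\overline{I(v)}$ or $O(s(e))\subseteq\overline{O(v)}$; but $e\in I(t(e))$ with $e\succ I(v)^+$ contradicts the first, while $e\in O(s(e))$ with $e\prec O(v)^-$ contradicts the second. This closes the gap and yields $\overline{E(v)}=\overline{I(v)}\sqcup\overline{O(v)}$. Throughout I would keep an eye on the degenerate cases (sources, sinks, and $|I(v)|$ or $|O(v)|\le 1$), where the convex hulls collapse and all claimed identities hold trivially.
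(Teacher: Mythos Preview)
Your proposal is correct and follows essentially the same route as the paper: both directions split into the same three adjacency configurations for $(Q_2)$, and the converse handles $(U_2)$ by the same contradiction on a hypothetical edge in the gap $(I(v)^+,O(v)^-)$ and $(U_3)$ by the same witness argument. If anything you are slightly more careful than the paper, since you treat the boundary case $f\in\{I(v_2)^-,I(v_2)^+\}$ explicitly (the paper silently passes to strict inequalities) and you make the role of $(Q_1)$ in the disjointness half of $(U_2)$ explicit.
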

\begin{proof}
Clearly, $(U_1)=(Q_1)$. We only need to show that $(U_2)+(U_3)\Longleftrightarrow (Q_2)$.

$(\Longleftarrow)$. To show $(Q_2)\Longrightarrow(U_2)$, we only need to show that for any processive vertex $v$, $O(v)^-=I(v)^++1$. We prove this by contradiction. Suppose there exist a processive vertex $v$ and an edge $e$ such that $I(v)^+\prec e \prec O(v)^-$. Clearly, $v=t(I(v)^+)=s(O(v)^-)$, then by $(Q_2)$, we have either $e\in I(t(e))\subseteq \overline{I(v)}$ or $e\in O(s(e))\subseteq\overline{O(v)}$, both of which  contradict $I(v)^+\prec e \prec O(v)^-$.

Now we show $(Q_2)\Longrightarrow(U_3)$. Assume $I(v_1)\cap \overline{I(v_2)}\neq \emptyset$, then there exists $e\in I(v_1)$ such that $I(v_2)^-\prec e \prec I(v_2)^+$. Clearly, $v_2=t(I(v_2)^-)=t(I(v_2)^+)$, then by $(Q_2)$, we have $I(v_1)=I(t(e))\subseteq \overline{I(v_2)}$. Similarly, $O(v_1)\cap \overline{O(v_2)}\neq \emptyset$ means that there exists $e\in O(v_1)$ such that $O(v_2)^-\prec e \prec O(v_2)^+$. By $v_2=s(O(v_2)^-)=s(O(v_2)^+)$ and $(Q_2)$, we have $O(v_1)=O(s(e))\subseteq \overline{O(v_2)}$.

$(\Longrightarrow)$. Assume $e_1\prec e\prec e_2$ and $e_1$, $e_2$ are adjacent, then there are three cases.

\noindent\textbf{Case 1:} $t(e_1)=t(e_2)=v$. In this case, $e\in I(t(e))\cap\overline{I(v)}$, by $(U_3)$, we have $I(t(e))\subseteq\overline{I(v)}$.

\noindent\textbf{Case 2:} $s(e_1)=s(e_2)=v$. In this case, $e\in O(s(e))\cap\overline{O(v)}$, by $(U_3)$, we have $I(s(e))\subseteq\overline{O(v)}$.

\noindent\textbf{Case 3:} $t(e_1)=s(e_2)=v$. In this case, $e\in \overline{E(v)}$, then by $(U_2)$, we have either $e\in \overline{I(v)}$ or $e\in \overline{O(v)}$, which reduces the proof to case 1 or case 2, respectively.
\end{proof}

 The following theorem is the main result of \cite{[LY19]}(Theorem $6.1$), which says that upward planar orders combinatorially  characterize upward planar drawings.

\begin{thm}
Any UPO-graph has a unique upward planar drawing up to planar isotopy; and conversely, there is an upward planar order on $E(G)$ for any upward plane graph $G$.
\end{thm}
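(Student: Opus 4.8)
The statement splits into two halves---\textbf{realization and uniqueness} (every UPO-graph is drawable, uniquely up to planar isotopy) and \textbf{existence} (every upward plane graph carries an upward planar order)---and the plan is to treat both through the sweep-line picture already visible in Figures $4$--$6$. Orient the vertical axis so that every directed edge strictly decreases in height, so that a directed path becomes a downward monotone arc. A generic horizontal line then meets the drawing in finitely many points carrying a left-to-right order, and, as the line descends past a vertex $v$, the incoming edges $I(v)$ (entering $v$ from above) are replaced by the outgoing edges $O(v)$ (leaving below). The single combinatorial fact driving everything is a \emph{consecutive-block lemma}: by planarity and monotonicity, at each height the edges of $I(v)$ occupy a contiguous interval of the current frontier and $O(v)$ reoccupies the same slot. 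I would establish this first by a short Jordan-curve argument.

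For the \textbf{existence} half I would start from a drawing, perturb it by an isotopy so that all vertices have distinct heights, and define $\preceq$ as the order in which edges are created reading the frontier from top to bottom and, within each vertex, from left to right---equivalently, the unique linear order whose restriction to every generic sweep line is the left-to-right order. Well-definedness is exactly the consecutive-block lemma. Condition $(Q_1)$ is then immediate: a directed path from $e_1$ to $e_2$ is a downward arc, so $e_1$ is created strictly before $e_2$, giving $e_1\prec e_2$. For $(Q_2)$, suppose $e_1\prec e\prec e_2$ with $e_1,e_2$ adjacent at $v$. The arcs of $e_1$ and $e_2$ together with $v$ bound a region $R$, and because $e$ is sandwiched between them on every sweep line where it appears, the whole arc of $e$---and hence, by the Jordan curve theorem and monotonicity, the endpoint of $e$ from which the relevant star emanates---lies in $R$. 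Reading off the three cases from the configuration pictures following Definition \ref{upo1} then yields $I(t(e))\subseteq\overline{I(v)}$ or $O(s(e))\subseteq\overline{O(v)}$.

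For the \textbf{realization and uniqueness} half I would run the construction in reverse. Given $(G,\preceq)$, process the vertices in a topological order refining $\to$ (possible by $(U_1)$), chosen so that $v$ is handled only after every edge lying $\preceq$-between two edges of $I(v)$ has been closed off, while maintaining a frontier segment carrying the currently dangling edge-ends in their $\preceq$-order. Then $(U_2)$ (which gives $O(v)^-=I(v)^++1$) together with $(U_3)$ makes the incoming ends $I(v)$ a consecutive block of the frontier, so I may pull that block down to a single point $v$ and emit $O(v)$ into the vacated slot without a crossing; the nesting condition $(U_3)$ guarantees that the incoming blocks of distinct vertices are either disjoint or nested, so successive contractions stay planar. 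This produces an upward planar drawing inducing $\preceq$. For uniqueness, I would use that an upward planar drawing is determined up to isotopy by its rotation system together with the left-to-right order on one reference sweep line; since $\preceq$ records the positions of $I(v)$ and $O(v)$ around every vertex and the relative order of any two frontier edges, any two drawings inducing $\preceq$ share the same embedding and are isotopic.

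The hard part will be the topological bookkeeping that links the order-theoretic sandwich $e_1\prec e\prec e_2$ to the set-theoretic containment of stars. In the existence direction this is the Jordan-curve argument that the \emph{entire} incoming (or outgoing) star of an endpoint of $e$ is confined to the bounded region $R$: one must rule out that a later branch of that star escapes $R$ by crossing $e_1$ or $e_2$, which is exactly where monotonicity of the arcs is essential. In the realization direction the delicate point is showing that contracting a consecutive frontier block never forces a crossing against an already-drawn edge, and that the required block is genuinely consecutive for a suitable processing order; this is precisely the content of $(U_3)$, and verifying that the nesting hypothesis is both necessary and sufficient for planarity to be maintained is, I expect, the technical heart of the proof.
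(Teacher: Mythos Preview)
The paper does not prove this theorem. It is quoted verbatim as ``the main result of \cite{[LY19]} (Theorem~6.1)'' and no argument is supplied here; the present paper takes the result as background and moves on to the composition theory. So there is no ``paper's own proof'' to compare your proposal against.

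That said, your sketch is a reasonable outline of a direct proof, and it is worth noting that the authors themselves remark in the introduction that in \cite{[LY19]} the existence of the associated upward planar order was established ``in an indirected way.'' Your sweep-line construction is therefore almost certainly \emph{not} the route taken in \cite{[LY19]} either: what you describe is essentially the compositional picture (Figures~4--6) promoted to a standalone existence proof, whereas the cited paper apparently argues more abstractly. If you want to turn your outline into a self-contained proof, the point that needs the most care is the realization half: you assert that one can choose a processing order of vertices so that, when $v$ is reached, $I(v)$ is a consecutive block of the current frontier, and you attribute this to $(U_3)$. That is the right idea, but $(U_3)$ by itself only gives nesting of $\overline{I(v)}$'s and $\overline{O(v)}$'s separately; showing that the dangling edge-ends on the frontier at the moment you process $v$ are exactly an interval requires tracking how earlier contractions interact with $\overline{I(v)}$, and in particular ruling out that some still-open outgoing edge of an earlier vertex sits strictly between two edges of $I(v)$. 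This is where $(U_2)$ and the admissible-style reasoning come in, and it deserves an explicit lemma rather than a parenthetical.
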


\section{Admissible condition}
In this section, we recall the notion of a progressive graph, which is an acyclic directed graph with boundary,  and introduce the notion of  admissible condition for upward planar orders on progressive graphs with the aim to give a combinatorial characterization of progressive plane graphs.

The following notion was introduced in \cite{[JS91]}.
\begin{defn}
An acyclic directed graph $G$ with a distinguished subset $\sigma$ of leaves (vertices of degree one) is called a \textbf{progressive graph}, denoted by $G^\sigma$ or $(G,\sigma)$.
\end{defn}
The distinguished subset $\sigma$ is called the \textbf{boundary} of $G^\sigma$. A progressive graph $G^\sigma$ is an acyclic directed graph $G$ with boundary $\sigma$, and we write this fact as $\partial G^\sigma=\sigma$. Any acyclic directed graph can be naturally viewed as a progressive graph with $\sigma=\emptyset$.
The elements of $\sigma$ and $V(G)-\sigma$ are called \textbf{boundary vertices} and  \textbf{inner vertices}, respectively.
A boundary vertex $v\in \sigma$ is also called an \textbf{input vertex} if $v$ is a source, and is called an \textbf{output vertex} if $v$ is a sink. An edge starting from a boundary vertex is called an \textbf{input edge} of $G^\sigma$; and an edge ending with a boundary vertex is called an \textbf{output edge} of $G^\sigma$. The set of input edges is called the \textbf{domain} of $G^\sigma$, denoted by $I(G^\sigma)$. The set of output edges is called the \textbf{codomain} of $G^\sigma$, denoted by  $O(G^\sigma)$.  When the boundary $\sigma$ is clear or irrelevant, we are free to write $G$ instead of $G^\sigma$ and $(G,\sigma)$ for convenience.

As shown in Fig. $2$ and Example \ref{type},  a progressive plane graph is an upward planar drawing of a progressive graph in a plane box with the boundary condition that a vertex is a boundary vertex if and only if it is drawn on the horizontal boundaries.

An upward plane graph can be combinatorially characterized as a UPO-graph. To give a similar combinatorial characterization of progressive plane graphs, we need the following notion.

\begin{defn}
An upward planar order $\preceq$ on a progressive graph $G^\sigma$ is called \textbf{admissible} if it satisfies the condition that for any inner vertex $v\in V(G)-\sigma$, $I(G^\sigma)\cap \overline{O(v)}=\emptyset$ and $O(G^\sigma)\cap \overline{I(v)}=\emptyset$.
\end{defn}

The upward planar order in Example \ref{type} is admissible. A progressive graph together with an admissible upward planar order is called an \textbf{admissible UPO-graph}, which serves as a combinatorial formulation of a progressive plane graph. The admissible condition can be understood as a combinatorial formulation of the boundary condition of a progressive plane graph.
\begin{ex}
The following shows an admissible UPO-graph (left) and a non-admissible UPO-graph (right), where the boundary vertices are omitted for convenience.
\begin{center}
\begin{tikzpicture}[scale=1.3]
	\draw[dashed]  (-4,4) rectangle (-1.5,2.5);
	\draw[dashed]  (-0.5,4) rectangle (2,2.5);
	\node at (-2.5,3.5) {};
	\node at (1,3.5) {};
	\node at (-3,4) {};
	\node at (-2,4) {};
	
	\draw[fill] (-2.5,2.9) circle [radius=0.04];
	\draw[fill] (1,3) circle [radius=0.04];
	\draw (-3,4) -- (-2.5,2.9)[postaction={decorate, decoration={markings,mark=at position .5 with {\arrow[black]{stealth}}}}];
	\draw (-2,4) -- (-2.5,2.9)[postaction={decorate, decoration={markings,mark=at position .5 with {\arrow[black]{stealth}}}}];
	\node at (0.5,4) {};
	\node at (1.5,4) {};
	\draw (0.5,4) -- (1,3)[postaction={decorate, decoration={markings,mark=at position .5 with {\arrow[black]{stealth}}}}];
	\draw (1.5,4) -- (1,3)[postaction={decorate, decoration={markings,mark=at position .5 with {\arrow[black]{stealth}}}}];
	\node at (-3.5,3.5) {};
	\node at (-3.5,2.5) {};
	\draw[fill] (-3.5,3.5) circle [radius=0.04];
	\draw (-3.5,3.5) -- (-3.5,2.5)[postaction={decorate, decoration={markings,mark=at position .5 with {\arrow[black]{stealth}}}}];
	
	\node at (0,3.5) {};
	\draw[fill] (0,3.5) circle [radius=0.04];
	\node at (0,2.5) {};
	\draw (0,3.5) -- (0,2.5)[postaction={decorate, decoration={markings,mark=at position .5 with {\arrow[black]{stealth}}}}];
	\node[scale=0.8] at (-3.7,3) {$1$};
	\node[scale=0.8] at (-3,3.6) {$2$};
	\node[scale=0.8] at (-2,3.6) {$3$};
	\node[scale=0.8] at (0.6,3.5) {$1$};
	\node[scale=0.8] at (1.4,3.5) {$3$};
	\node[scale=0.8] at (-0.2,3) {$2$};
\end{tikzpicture}
\end{center}
\end{ex}

Notice that, in graphical calculi,  progressive graphs can have isolated vertices. In these cases,  we can make our combinatorial characterization still valid by changing isolated vertices into isolated virtual edges.

$$
\begin{matrix}\begin{matrix}
\begin{tikzpicture}[scale=0.45]

\draw [loosely dashed] (-3,1.5) rectangle (8.5,-6.5);
\node [above](v1) at (-1,1.5) {};
\node (v2) at (0,-1) {};
\node (v6) at (-1.5,-4.5) {};
\node [left] at (-1.5,-4.5) {};
\node (v3) at (1.5,0.5) {};
\node (v8) at (4,-2) {};
\node (v4) at (0.5,-3.5) {};
\node [below](v5) at (0.5,-6.5) {};
\node [above](v7) at (4,1.5) {};
\draw  (-1,1.5)  -- (0,-1)[postaction={decorate, decoration={markings,mark=at position .5 with {\arrow[black]{stealth}}}}];
\draw   (1.5,0.5) -- (0,-1)[postaction={decorate, decoration={markings,mark=at position .5 with {\arrow[black]{stealth}}}}];
\draw   (1.5,0.5)--  (0.5,-3.5)[postaction={decorate, decoration={markings,mark=at position .5 with {\arrow[black]{stealth}}}}];
\draw  (0.5,-3.5) -- (v5)[postaction={decorate, decoration={markings,mark=at position .5 with {\arrow[black]{stealth}}}}];
\draw  (0,-1) -- (-1.5,-4.5)[postaction={decorate, decoration={markings,mark=at position .5 with {\arrow[black]{stealth}}}}];
\draw  (0,-1) -- (0.5,-3.5)[postaction={decorate, decoration={markings,mark=at position .5 with {\arrow[black]{stealth}}}}];
\draw  (4,1.5)-- (4,-2)[postaction={decorate, decoration={markings,mark=at position .5 with {\arrow[black]{stealth}}}}];
\draw   (1.5,0.5)-- (4,-2)[postaction={decorate, decoration={markings,mark=at position .5 with {\arrow[black]{stealth}}}}];
\node (v9) at (3,-5.5) {};
\draw  (0.5,-3.5) -- (3,-5.5)[postaction={decorate, decoration={markings,mark=at position .5 with {\arrow[black]{stealth}}}}];
\draw  (4,-2) -- (3,-5.5)[postaction={decorate, decoration={markings,mark=at position .5 with {\arrow[black]{stealth}}}}];
\node (v10) at (2,-3.5) {};
\draw   (1.5,0.5) -- (2,-3.5)[postaction={decorate, decoration={markings,mark=at position .5 with {\arrow[black]{stealth}}}}];
\draw  (4,-2) -- (2,-3.5)[postaction={decorate, decoration={markings,mark=at position .5 with {\arrow[black]{stealth}}}}];

\draw [fill](v2) circle [radius=0.11];
\draw [fill](v3) circle [radius=0.11];
\draw [fill](v4) circle [radius=0.11];
\draw [fill](v6) circle [radius=0.11];
\draw [fill](v8) circle [radius=0.11];
\draw [fill](v9) circle [radius=0.11];
\draw [fill](v10) circle [radius=0.11];

\node at (-1,0.5) {};
\node at (4.5,0) {};
\node at (0,-5.5) {};
\node at (-1.5,-3) {};

\draw(6.5,1.5)--(6.5,-6.5)[postaction={decorate, decoration={markings,mark=at position .5 with {\arrow[black]{stealth}}}}];
\node [above]at(6.5,1.5) {};
\node [below]at (6.5,-6.5) {};
\node at (7,-3) {};

\draw [fill] (-1,1.5) circle [radius=0.11];
\draw [fill](0.5,-6.5) circle [radius=0.11];
\draw [fill](4,1.5) circle [radius=0.11];
\draw [fill](6.5,-6.5) circle [radius=0.11];
\draw [fill](6.5,1.5) circle [radius=0.11];

\draw [fill](5.5,-1) circle [radius=0.11];
\draw [fill](2.7,-1.7) circle [radius=0.11];

\draw [fill](0.35,0.45) circle [radius=0.11];

\draw  plot[smooth, tension=.7] coordinates {(4,-2) (4.5,-3) (4,-4.5) (3,-5.5)}[postaction={decorate, decoration={markings,mark=at position .5 with {\arrow[black]{stealth}}}}];
\node at (5.5,-4.5) {};
\draw [fill](5.5,-4.5) circle [radius=0.11];
\end{tikzpicture}
\end{matrix}&&\begin{matrix}\Longrightarrow\end{matrix}&&\begin{matrix}
\begin{tikzpicture}[scale=0.45]

\draw [loosely dashed] (-3,1.5) rectangle (8.5,-6.5);
\node [above](v1) at (-1,1.5) {};
\node (v2) at (0,-1) {};
\node (v6) at (-1.5,-4.5) {};
\node [left] at (-1.5,-4.5) {};
\node (v3) at (1.5,0.5) {};
\node (v8) at (4,-2) {};
\node (v4) at (0.5,-3.5) {};
\node [below](v5) at (0.5,-6.5) {};
\node [above](v7) at (4,1.5) {};
\draw  (-1,1.5)  -- (0,-1)[postaction={decorate, decoration={markings,mark=at position .5 with {\arrow[black]{stealth}}}}];
\draw   (1.5,0.5) -- (0,-1)[postaction={decorate, decoration={markings,mark=at position .5 with {\arrow[black]{stealth}}}}];
\draw   (1.5,0.5)--  (0.5,-3.5)[postaction={decorate, decoration={markings,mark=at position .5 with {\arrow[black]{stealth}}}}];
\draw  (0.5,-3.5) -- (v5)[postaction={decorate, decoration={markings,mark=at position .5 with {\arrow[black]{stealth}}}}];
\draw  (0,-1) -- (-1.5,-4.5) node (v19) {}[postaction={decorate, decoration={markings,mark=at position .5 with {\arrow[black]{stealth}}}}];
\draw  (0,-1) node (v16) {} -- (0.5,-3.5)[postaction={decorate, decoration={markings,mark=at position .5 with {\arrow[black]{stealth}}}}];
\draw  (4,1.5)-- (4,-2)[postaction={decorate, decoration={markings,mark=at position .5 with {\arrow[black]{stealth}}}}];
\draw   (1.5,0.5)-- (4,-2)[postaction={decorate, decoration={markings,mark=at position .5 with {\arrow[black]{stealth}}}}];
\node (v9) at (3,-5.5) {};
\draw  (0.5,-3.5) -- (3,-5.5)[postaction={decorate, decoration={markings,mark=at position .5 with {\arrow[black]{stealth}}}}];
\draw  (4,-2) -- (3,-5.5) node (v22) {}[postaction={decorate, decoration={markings,mark=at position .5 with {\arrow[black]{stealth}}}}];
\node (v10) at (2,-3.5) {};
\draw   (1.5,0.5) node (v18) {} -- (2,-3.5)[postaction={decorate, decoration={markings,mark=at position .5 with {\arrow[black]{stealth}}}}];
\draw  (4,-2) -- (2,-3.5) node (v21) {}[postaction={decorate, decoration={markings,mark=at position .5 with {\arrow[black]{stealth}}}}];

\draw [fill](v2) circle [radius=0.11];
\draw [fill](v3) circle [radius=0.11];
\draw [fill](v4) circle [radius=0.11];
\draw [fill](v6) circle [radius=0.11];
\draw [fill](v8) circle [radius=0.11];
\draw [fill](v9) circle [radius=0.11];
\draw [fill](v10) circle [radius=0.11];

\node at (-1,0.5) {};
\node at (4.5,0) {};
\node at (0,-5.5) {};
\node at (-1.5,-3) {};

\draw(6.5,1.5)--(6.5,-6.5)[postaction={decorate, decoration={markings,mark=at position .5 with {\arrow[black]{stealth}}}}];
\node [above]at(6.5,1.5) {};
\node [below]at (6.5,-6.5) {};
\node at (7,-3) {};

\draw [fill] (-1,1.5) circle [radius=0.11];

\draw [fill](0.5,-6.5) circle [radius=0.11];
\draw [fill](4,1.5) circle [radius=0.11];
\draw [fill](6.5,-6.5) circle [radius=0.11];
\draw [fill](6.5,1.5) circle [radius=0.11];

\draw [fill](5.5,-1) circle [radius=0.11];
\draw [fill](5.5,0) circle [radius=0.11];
\draw [densely dotted]  (5.5,0)-- (5.5,-1)[postaction={decorate, decoration={markings,mark=at position .55 with {\arrow[black]{stealth}}}}];

\draw [fill](2.4,-1.2) circle [radius=0.11];
\draw [fill](2.4,-2.2) circle [radius=0.11];
\draw [densely dotted]  (2.4,-1.2)-- (2.4,-2.2)[postaction={decorate, decoration={markings,mark=at position .55 with {\arrow[black]{stealth}}}}];

\draw [fill](0.15,0.9) circle [radius=0.11];
\draw [fill](0.15,-0.1) circle [radius=0.11];
\draw [densely dotted]  (0.15,0.9)-- (0.15,-0.1)[postaction={decorate, decoration={markings,mark=at position .55 with {\arrow[black]{stealth}}}}];

\draw  plot[smooth, tension=.7] coordinates {(4,-2) (4.5,-3) (4,-4.5) (3,-5.5)}[postaction={decorate, decoration={markings,mark=at position .5 with {\arrow[black]{stealth}}}}];
\node at (5.5,-4.5) {};
\node at (5.5,-3.5) {};
\draw [fill](5.5,-5) circle [radius=0.11];
\draw [fill](5.5,-4) circle [radius=0.11];
\draw [densely dotted]  (5.5,-4)-- (5.5,-5)[postaction={decorate, decoration={markings,mark=at position .55 with {\arrow[black]{stealth}}}}];

\end{tikzpicture}
\end{matrix}
\end{matrix}
$$

We list the correspondence between geometrical and combinatorial notions in this paper as follows.
\begin{center}
\begin{tikzpicture}[scale=3.5]
\node at (0.1,-3.3) {Geometry};
\node at (0.1,-3.7) {Combinatorics};

\draw  (-0.3,-3.1) rectangle (0.5,-3.5) node (v1) {};
\draw  (-0.3,-3.5) rectangle (0.5,-3.9);

\draw  (0.5,-3.1) rectangle (1.9,-3.5) node (v2) {};
\draw  (v1) rectangle (1.9,-3.9);

\node at (1.2,-3.3) {(boxed) upward plane graph};
\node at (1.2,-3.7) {UPO-graph};
\draw  (1.9,-3.1) rectangle (2.9,-3.5) node (v3) {};
\draw  (v2) rectangle (2.9,-3.9);
\node at (2.4,-3.3) {boundary condition};
\node at (2.4,-3.7) {admissible condition};

\draw  (2.9,-3.1) rectangle (4.1,-3.5);
\node at (3.5,-3.3) {progressive plane graph};
\draw  (v3) rectangle (4.1,-3.9);
\node at (3.5,-3.7) {admissible UPO-graph};
\end{tikzpicture}
\end{center}

\section{Composition of upward planar orders}

Given a progressive graph $(G_1,\sigma_1)$ with an admissible upward planar order $\prec_1$ and $n$ output edges $o_1\prec_1\cdots\prec_1 o_n$, depicted as
\begin{center}
\begin{tikzpicture}[scale=0.6]

\draw [dashed] (-1,2.5) -- (-1,0.5);
\draw [dashed] (-1,2.5) --(3,2.5);
\draw [dashed] (3,2.5) -- (3,0.5);
\draw [dashed] (-1,0.5) -- (3,0.5);

\node [scale=0.7][below] at (-0.5,0) {$o_1$};
\node (v12) at (0.5,0) {};
\node [scale=0.7][below] at (0.5,0) {$o_2$};
\node (v14) at (2.5,0) {};
\node [scale=0.7][below] at (2.5,0) {$o_n$};
\draw  (-0.5,0.5) --  (-0.5,0)[postaction={decorate, decoration={markings,mark=at position .65 with {\arrow[black]{stealth}}}}];
\draw  (0.5,0.5) -- (0.5,0)[postaction={decorate, decoration={markings,mark=at position .65 with {\arrow[black]{stealth}}}}];
\draw  (2.5,0.5) -- (2.5,0)[postaction={decorate, decoration={markings,mark=at position .65 with {\arrow[black]{stealth}}}}];
\node at (1,1.5) {$(G_1,\sigma_1,\prec_1)$};
\node at (1.5,0) {$\cdots$};
\end{tikzpicture}
\end{center}
and a progressive graph $(G_2,\sigma_2)$ with an admissible upward planar order $\prec_2$ and $n$ input edges $i_1\prec_2\cdots\prec_2 i_n$, depicted as
\begin{center}
\begin{tikzpicture}[scale=0.6]

\node (v5) at (-1,-1.5) {};
\node (v6) at (3,-1.5) {};
\node (v7) at (-1,-3.5) {};
\node (v8) at (3,-3.5) {};

\draw [dashed] (-1,-1.5) -- (3,-1.5);
\draw [dashed] (-1,-1.5) -- (-1,-3.5);
\draw [dashed] (3,-1.5) -- (3,-3.5);
\draw [dashed] (-1,-3.5) -- (3,-3.5);

\node [scale=0.7][above] at (-0.5,-1) {$i_1$};
\node (v17) at (0.5,-1) {};
\node[scale=0.7] [above] at (0.5,-1) {$i_2$};
\node (v19) at (2.5,-1) {};
\node [scale=0.7][above] at (2.5,-1) {$i_n$};
\node (v10) at (-0.5,0) {};

\draw  (-0.5,-1) -- (-0.5,-1.5)[postaction={decorate, decoration={markings,mark=at position .5 with {\arrow[black]{stealth}}}}];
\draw  (0.5,-1) -- (0.5,-1.5)[postaction={decorate, decoration={markings,mark=at position .5 with {\arrow[black]{stealth}}}}];
\draw  (2.5,-1) -- (2.5,-1.5)[postaction={decorate, decoration={markings,mark=at position .5 with {\arrow[black]{stealth}}}}];

\node at (1,-2.5) {$(G_2, \sigma_2,\prec_2)$};

\node at (1.5,-1) {$\cdots$};
\end{tikzpicture}
\end{center}
we can compose them to get a new progressive graph $(G,\sigma)=(G_2\circ G_1, \sigma_2\vee\sigma_1)$ by identifying  the output edge $o_k$ of $(G_1,\sigma_1)$ with the input edge $i_k$ of $(G_2,\sigma_2)$ for each $k=1,2,\cdots,n$, as depicted in Fig. $7$,  where $\overline{e_k}$ $(k=1,2,\cdots,n)$ are newly formed edges by identifying $i_k$ and $o_k$.

\begin{center}
\begin{tikzpicture}[scale=0.6]

\draw [dashed] (-1,2.5) -- (-1,0.5);
\draw [dashed] (-1,2.5) --(3,2.5);
\draw [dashed] (3,2.5) -- (3,0.5);
\draw [dashed] (-1,0.5) -- (3,0.5);

\node (v9) at (-0.5,0.5) {};
\node (v13) at (2.5,0.5) {};
\node (v11) at (0.5,0.5) {};
\node (v16) at (-0.5,-1.5) {};
\node (v18) at (0.5,-1.5) {};
\node (v20) at (2.5,-1.5) {};
\node (v15) at (-0.5,-1) {};

\node (v10) at (-0.5,0) {};
\node [scale=0.7][left] at (-0.5,0) {$o_1$};
\node (v12) at (0.5,0) {};
\node [scale=0.7][left] at (0.5,0) {$o_2$};
\node (v14) at (2.5,0) {};
\node [scale=0.7][left] at (2.5,0) {$o_n$};

\node [scale=0.7][right] at (-0.5,-0.5) {$\overline{e_1}$};
\node [scale=0.7][right] at (0.5,-0.5) {$\overline{e_2}$};
\node [scale=0.7][right] at (2.5,-0.5) {$\overline{e_n}$};

\node at (1,1.5) {$(G_1,\sigma_1)$};

\node at (1.5,0) {$\cdots$};


\node (v5) at (-1,-1.5) {};
\node (v6) at (3,-1.5) {};
\node (v7) at (-1,-3.5) {};
\node (v8) at (3,-3.5) {};

\draw [dashed] (-1,-1.5) -- (3,-1.5);
\draw [dashed] (-1,-1.5) -- (-1,-3.5);
\draw [dashed] (3,-1.5) -- (3,-3.5);
\draw [dashed] (-1,-3.5) -- (3,-3.5);

\node [scale=0.7][left] at (-0.5,-1) {$i_1$};
\node (v17) at (0.5,-1) {};
\node[scale=0.7] [left] at (0.5,-1) {$i_2$};
\node (v19) at (2.5,-1) {};
\node [scale=0.7][left] at (2.5,-1) {$i_n$};
\node (v10) at (-0.5,0) {};

\draw  (-0.5,0.5) -- (-0.5,-1.5)[postaction={decorate, decoration={markings,mark=at position .5 with {\arrow[black]{stealth}}}}];
\draw  (0.5,0.5) -- (0.5,-1.5)[postaction={decorate, decoration={markings,mark=at position .5 with {\arrow[black]{stealth}}}}];
\draw  (2.5,0.5) -- (2.5,-1.5)[postaction={decorate, decoration={markings,mark=at position .5 with {\arrow[black]{stealth}}}}];

\node at (1,-2.5) {$(G_2,\sigma_2)$};

\node at (1.5,-1) {$\cdots$};
\end{tikzpicture}

Figure $7$. Composition of $(G_1,\sigma_1)$ and $(G_2,\sigma_2)$
\end{center}

We mention that in the composition $G=G_2\circ G_1$, we remove all output vertices of $G_1$ and all input vertices of $G_2$; that the boundary $\sigma=\sigma_2\vartriangle\sigma_1$ of $G$ is the disjoint union of the set of input vertices of $G_1$ and the set of  output vertices of $G_2$; and that the set of inner vertices of $G$ is the disjoint union of the set of inner vertices of $G_1$ and the set of inner vertices of $G_2$.
For convenient, when necessary, we will freely identify $\overline{e_k}$ with $o_k$ or $i_k$; and moreover, we can freely view $E(G_1)$ and $E(G_2)$ as subsets of $E(G)$. Especially, we can say that $I(G^\sigma)=I(G_1^{\sigma_1})$ and $O(G^\sigma)=O(G_2^{\sigma_2})$.

To better describe the composite rule of upward plane orders, we introduce some notations.  For a linearly ordered set $(S,\prec)$, an \textbf{interval partition} is a partition $S=\bigsqcup_{i=1}^n E_i$ such that each block $E_i$ $(1\leq i\leq n)$ is an interval and $max\  E_i\prec min \ E_j$ for any $i<j$. For intervals $I_1$ and $I_2$, when $max \ I_1\prec min\ I_2$, we write $I_1\cup I_2=I_1\triangleleft I_2$, and when $max \ I_1\preceq min\  I_2$, we write $I_1\cup I_2=I_1\trianglelefteq I_2$.  We view such an interval partition as an ordered list of intervals and write $(S,\prec)=E_1\triangleleft E_2\triangleleft\cdots\triangleleft E_n$.

Since upward planar orders are linear orders on edges sets, then input edges and output edges can chop edge sets into disjoint intervals.
So  the edge posets of $G_1$ and $G_2$ have two interval partitions
$$(E(G_1), \prec_1)=Q_1\triangleleft\{o_1\}\triangleleft...\triangleleft Q_k\triangleleft\{o_k\} \triangleleft...\triangleleft Q_n\triangleleft\{o_n\}\triangleleft Q_{n+1}, $$
$$(E(G_2), \prec_2)=P_0\triangleleft\{i_1\}\triangleleft P_1\triangleleft...\triangleleft \{i_k\}\triangleleft P_k \triangleleft...\triangleleft\{i_n\}\triangleleft P_n,$$
with $Q_1=[1, o_1)$, $ Q_k=( o_{k-1}, o_k)$ $(2\leq k \leq n)$ and  $Q_{n+1}=(o_{n},+\infty]$; $P_0=[1,i_1)$, $P_k=(i_{k}, i_{k+1})$ $(1\leq k \leq n-1)$, $P_n=( i_n,+\infty]$, where $1$ and $+\infty$ denote the minimal element and maximal element, respectively. We call $Q_k$ $(k=1,2,\cdots,n+1)$ the $k$-th \textbf{basic interval} of $(G_1,\sigma_1,\prec_1)$ and $P_l$  $(l=0,1,\cdots,n)$ the $l$-th \textbf{basic interval} of $(G_2,\sigma_2,\prec_2)$. Basic intervals do not contains output edges of $G_1$ and input edges of $G_2$.

\begin{defn}
Given two admissible UPO-graphs $(G_1,\sigma_1, \prec_1)$ and $(G_2,\sigma_2,\prec_2)$ as above,
the \textbf{composed linear order} $\prec=\prec_2\circ\prec_1$ on $E(G_2\circ G_1)$ is defined as the shuffle-style  linear order
$$(E(G_2\circ G_1),\prec)=P_0\triangleleft Q_1\triangleleft\{\overline{e_1}\}\triangleleft P_1\triangleleft...\triangleleft Q_k\triangleleft\{\overline{e_k}\}\triangleleft P_k \triangleleft...\triangleleft Q_n\triangleleft\{\overline{e_n}\}\triangleleft P_n\triangleleft Q_{n+1}.$$
\end{defn}

As a direct consequence of the composition rule, we have the following lemma.
\begin{lem}\label{preserve}
The composition preserves the linear orders $\prec_1$ and $\prec_2$.
\end{lem}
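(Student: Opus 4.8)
The plan is to show that each of the two canonical inclusions $E(G_1)\hookrightarrow E(G_2\circ G_1)$ and $E(G_2)\hookrightarrow E(G_2\circ G_1)$ (using the identifications $o_k=\overline{e_k}$ and $i_k=\overline{e_k}$) is an order embedding; that is, for edges $e,e'$ lying in a common factor, their relative position under the composed order $\prec$ agrees with their relative position under $\prec_1$ (resp. $\prec_2$). Since $\prec$, $\prec_1$ and $\prec_2$ are all linear orders and the inclusions are injective, it suffices to check order preservation in one direction; order reflection for injective order-preserving maps between linear orders then follows automatically.

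For $\prec_1$, I would argue directly from the shuffle description of $\prec$. Viewed inside $E(G_2\circ G_1)$, the edges of $G_1$ are exactly those lying in the blocks $Q_1,\dots,Q_{n+1}$ together with the identified edges $\overline{e_1}=o_1,\dots,\overline{e_n}=o_n$. Reading off the composed list $P_0\triangleleft Q_1\triangleleft\{\overline{e_1}\}\triangleleft\cdots\triangleleft Q_n\triangleleft\{\overline{e_n}\}\triangleleft P_n\triangleleft Q_{n+1}$ and deleting every $P_l$-block (each of which consists solely of edges of $G_2$) leaves precisely $Q_1\triangleleft\{o_1\}\triangleleft\cdots\triangleleft Q_n\triangleleft\{o_n\}\triangleleft Q_{n+1}$, which is the defining interval partition of $(E(G_1),\prec_1)$. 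Because deleting elements from a linearly ordered list never alters the relative order of the surviving elements, the order that $\prec$ induces on $E(G_1)$ coincides with $\prec_1$.

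The argument for $\prec_2$ is entirely symmetric: deleting every $Q_k$-block from the composed list and reading $\overline{e_k}$ as $i_k$ recovers $P_0\triangleleft\{i_1\}\triangleleft P_1\triangleleft\cdots\triangleleft\{i_n\}\triangleleft P_n$, the defining interval partition of $(E(G_2),\prec_2)$. Hence $\prec$ restricts to $\prec_2$ on $E(G_2)$ as well, completing the verification.

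The content here is purely combinatorial bookkeeping, so I do not anticipate a genuine obstacle; the only point demanding care is the consistent treatment of the three identified copies $o_k=i_k=\overline{e_k}$, so that when restricting to $E(G_1)$ the edge $\overline{e_k}$ is read as $o_k$, and when restricting to $E(G_2)$ it is read as $i_k$. Once this identification is fixed, the lemma is immediate from the shuffle-style definition of $\prec$, which is exactly why it is flagged as a direct consequence of the composition rule rather than requiring the deeper analysis reserved for Theorem \ref{main}.
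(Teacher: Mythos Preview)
Your proposal is correct and matches the paper's treatment: the paper states this lemma without proof, labeling it ``a direct consequence of the composition rule,'' and your argument is precisely the natural unpacking of that remark---delete the $P_l$-blocks (resp.\ $Q_k$-blocks) from the shuffle list to recover the interval partition of $(E(G_1),\prec_1)$ (resp.\ $(E(G_2),\prec_2)$). There is nothing to add.
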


Since the composition is of shuffle-style, it is not difficult to prove the following result.
\begin{thm}
The composition of upward planar orders is associative.
\end{thm}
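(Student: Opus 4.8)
The composition of upward planar orders is associative.

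The plan is to reduce associativity of the order composition to associativity of the underlying graph composition together with a pointwise comparison of the two resulting linear orders. First I would check that $(G_3\circ G_2)\circ G_1=G_3\circ(G_2\circ G_1)$ as progressive graphs: both identify $G_1$'s outputs with $G_2$'s inputs and $G_2$'s outputs with $G_3$'s inputs, so they carry the same edge set $E(G)$, the same matched edges, and the same boundary; this step is routine. It then suffices to prove that the two linear orders $(\prec_3\circ\prec_2)\circ\prec_1$ and $\prec_3\circ(\prec_2\circ\prec_1)$ on $E(G)$ coincide, i.e.\ that they return the same verdict on every pair of edges.

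By Lemma \ref{preserve} applied twice, both composite orders restrict to $\prec_1$, $\prec_2$ and $\prec_3$ on the three factors, so they agree on any pair lying in a common factor. Since each matched edge may be identified with a factor edge ($\overline{e_k}=o_k=i_k\in E(G_1)\cap E(G_2)$, and likewise $\overline{f_l}\in E(G_2)\cap E(G_3)$), it suffices to treat comparisons between edges that each lie in some $E(G_i)$, split into the same-factor, adjacent-factor, and far-factor cases. For the adjacent case I would first extract the cross-rule of a single composition directly from its block structure: for the upper factor (whose outputs are glued) write $\lambda(a)$ for the number of matched edges $\prec_1$-below $a$, so $a\in Q_{\lambda(a)+1}$, and for the lower factor write $\mu(b)$ for the number of matched edges $\prec_2$-below $b$, so $b\in P_{\mu(b)}$; reading off $P_0\triangleleft Q_1\triangleleft\{\overline{e_1}\}\triangleleft P_1\triangleleft\cdots$ gives, for a non-matched upper edge $a$ and lower edge $b$, the rule $a\prec b\iff\lambda(a)<\mu(b)$.

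For adjacent factors ($G_1$ versus $G_2$, and $G_2$ versus $G_3$) this rule is \emph{local}: by Lemma \ref{preserve} the counts $\lambda,\mu$ entering it are computed from $\prec_1,\prec_2,\prec_3$ and the matching alone, unchanged by whether the third graph has already been glued on, so both bracketings return the same verdict. The heart of the proof, and the step I expect to be the main obstacle, is the comparison of an edge $a\in E(G_1)$ with an edge $c\in E(G_3)$, since here the two bracketings route the comparison through $G_2$ in superficially different ways (via the intermediate $G_2\circ G_1$ on one side and $G_3\circ G_2$ on the other, whose basic intervals subdivide $E(G_2)$ along its outputs and along its inputs respectively).

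To resolve this, set $A=\#\{k:o_k\prec_1 a\}$ and $C=\#\{l:j_l\prec_3 c\}$, and let $\mu_2(p_l)$ count the inputs of $G_2$ below its output $p_l$ and $\lambda_2'(i_k)$ count the outputs of $G_2$ below its input $i_k$. Using the cross-rule of the inner composition together with Lemma \ref{preserve}, I would show that $G_3\circ(G_2\circ G_1)$ gives $a\prec c\iff\#\{l:\mu_2(p_l)\le A\}<C$, while $(G_3\circ G_2)\circ G_1$ gives $a\prec c\iff A<\#\{k:\lambda_2'(i_k)<C\}$. The concluding step is the combinatorial identity that both conditions are equivalent to the single internal statement $i_{A+1}\prec_2 p_C$ (the $(A{+}1)$-th input of $G_2$ precedes its $C$-th output, under the conventions $i_{n+1}=+\infty$, $p_0=-\infty$): indeed $\#\{l:\mu_2(p_l)\le A\}$ is the number of outputs below $i_{A+1}$, so the first condition reads $i_{A+1}\prec_2 p_C$, and $\#\{k:\lambda_2'(i_k)<C\}$ is the number of inputs below $p_C$, so the second reads $i_{A+1}\prec_2 p_C$ as well. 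Since this statement is intrinsic to $\prec_2$ and symmetric in the two bracketings, the orders agree on all $G_1$–$G_3$ pairs, completing the proof (assuming, for clarity, that the distinguished input and output edges of $G_2$ are distinct; the degenerate case of an isolated virtual edge is handled by the same bookkeeping).
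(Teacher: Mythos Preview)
The paper does not actually prove this theorem; it only remarks that ``since the composition is of shuffle-style, it is not difficult to prove'' and then states the result without argument. Your proposal supplies precisely the verification the paper omits, and it is correct. The reduction to pairwise comparisons, the cross-rule $a\prec b\iff\lambda(a)<\mu(b)$ read off from the block decomposition $P_0\triangleleft Q_1\triangleleft\{\overline{e_1}\}\triangleleft P_1\triangleleft\cdots$, and the locality of the adjacent-factor comparison (via Lemma~\ref{preserve}, which guarantees that the counts $\lambda,\mu$ are unchanged when a third factor is glued on the far side) are all sound. The far-factor step is also correct: $\#\{l:\mu_2(p_l)\le A\}$ is indeed the number of outputs of $G_2$ below $i_{A+1}$, and $\#\{k:\lambda_2'(i_k)<C\}$ is the number of inputs of $G_2$ below $p_C$, so both bracketings reduce to the single $\prec_2$-comparison $i_{A+1}\prec_2 p_C$. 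Your handling of boundary conventions ($i_{n+1}=+\infty$, $p_0=-\infty$) and your caveat about input--output coincidences in $G_2$ cover the edge cases.

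In short, there is no paper proof to compare against; your argument is considerably more detailed than anything the authors provide, but it is exactly the kind of bookkeeping one must do to turn their one-line claim into a proof, and you have carried it out correctly.
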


The following is the main result of this paper, which says that the class of admissible UPO-graphs is closed under composition.

\begin{thm}\label{main}
The composition $(G,\sigma,\prec)=(G_2\circ G_1, \sigma_2\vartriangle\sigma_1, \prec_2\circ\prec_1)$ of any two admissible UPO-graphs $(G_1,\sigma_1,\prec_1)$ and $(G_2,\sigma_2,\prec_2)$ is  an admissible UPO-graph.
\end{thm}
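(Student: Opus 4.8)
The plan is to verify the two axioms $(Q_1)$ and $(Q_2)$ of Definition \ref{upo1} for $\prec=\prec_2\circ\prec_1$ and then the admissible condition, using the shuffle description of $\prec$ together with the admissibility of the two factors. I first fix the anatomy of $G=G_2\circ G_1$. No vertex is shared by $G_1$ and $G_2$, since each interface edge $\overline{e_k}$ runs from the $G_1$-vertex $s(o_k)$ to the $G_2$-vertex $t(i_k)$; moreover every edge leaving a $G_2$-vertex is again a $G_2$-edge, so a directed path crosses the interface at most once. Consequently $f\to_G g$ holds precisely when $f\to_{G_1}g$, or $f\to_{G_2}g$, or $f\to_{G_1}o_k$ and $i_k\to_{G_2}g$ for some $k$ (equalities allowed). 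Combining this with Lemma \ref{preserve} and reading positions in the shuffle list settles $(Q_1)$ at once, the crossing case giving $f\preceq\overline{e_k}\preceq g$.

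Next I record the structural consequence of admissibility that drives the rest. For an inner vertex $v$ of $G_1$, admissibility of $(G_1,\prec_1)$ forbids any output edge $o_k$ inside $\overline{I_{G_1}(v)}$, so this hull sits in a single basic interval $Q_j$; as each $Q_j$ occupies one unbroken block of the shuffle, the composite hull obeys $\overline{I_G(v)}=\overline{I_{G_1}(v)}$ and contains only $G_1$-edges. Dually $\overline{O_{G_1}(v)}$ contains no input edge of $G_1$, so although $\overline{O_G(v)}$ may absorb the $G_2$-blocks $P_l$ lying between the interface edges it contains, its $G_1$-edges are still exactly those of $\overline{O_{G_1}(v)}$; the mirror statements hold for inner vertices of $G_2$. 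These facts yield the admissible condition for the composite: for an inner vertex $v$ of $G_1$ the hull $\overline{I_G(v)}$ has only $G_1$-edges and so meets no edge of $O(G)=O(G_2)$, while the $G_1$-edges of $\overline{O_G(v)}$ are those of $\overline{O_{G_1}(v)}$ and so avoid $I(G)=I(G_1)$ by admissibility of $G_1$; the case $v\in G_2$ is the mirror image.

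For the nesting condition I fix a triple $e_1\prec e\prec e_2$ with $e_1,e_2$ adjacent at a vertex $v$; since $e_1\neq e_2$ the vertex $v$ has degree at least two and is therefore inner, lying in $G_1$ or in $G_2$. I treat the case $v\in G_1$ and recover $v\in G_2$ by the reflection that reverses every edge together with the linear order, which carries $G_2\circ G_1$ to $G_1^{\mathrm{op}}\circ G_2^{\mathrm{op}}$, interchanges $I$ with $O$, and preserves both axioms and admissibility (so a vertex of $G_2$ becomes a vertex of the top factor). When the middle edge $e$ is a $G_1$-edge the required clause follows by transporting $(Q_2)$ for $(G_1,\prec_1)$ across Lemma \ref{preserve} and the hull identities above; in the mixed adjacency $t(e_1)=s(e_2)=v$ with $e$ a $G_1$-edge, I first invoke $(U_2)$ for $G_1$ to decide whether $e$ falls on the $\overline{I_{G_1}(v)}$ side or the $\overline{O_{G_1}(v)}$ side, and then argue as in the two pure cases.

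The genuinely new configuration, and the step I expect to be the main obstacle, is $v\in G_1$ with the middle edge $e$ a $G_2$-edge, say $e\in P_l$; then the shuffle forces $e_1\preceq_1 o_l\prec_1 e_2$. If $s(e_1)=s(e_2)=v$ this places $o_l$ in $\overline{O_{G_1}(v)}$ directly, and if $t(e_1)=s(e_2)=v$ admissibility of $G_1$ excludes $o_l$ from $\overline{I_{G_1}(v)}$, so $(U_2)$ again puts $o_l$ in $\overline{O_{G_1}(v)}$; in either case the interface edge $\overline{e_l}$ lies in $\overline{O_G(v)}$. Now $s(e)$ is an inner vertex of $G_2$, so admissibility of $G_2$ confines $\overline{O_{G_2}(s(e))}$ to the single block $P_l$; and since every element of $P_l$ lies strictly between $\overline{e_l}$ and $e_2$, both of which belong to $\overline{O_G(v)}$, the whole block $P_l$, and hence $O_G(s(e))$, lies in $\overline{O_G(v)}$. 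This is the required clause, and with the reflection of the previous paragraph it completes the verification that $\prec$ is an admissible upward planar order.
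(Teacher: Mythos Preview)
Your proof is correct and follows essentially the same strategy as the paper: verify $(Q_1)$, admissibility, and $(Q_2)$ by a case analysis governed by which factor the vertex $v$ and the middle edge $e$ belong to, using admissibility to confine the relevant hulls to single basic blocks of the shuffle. The paper organises the $(Q_2)$ verification by adjacency type first (its Cases~1--3, each with four sub-cases) and proves a separate lemma for $(U_2)$ in the composite, whereas you front-load the hull identities $\overline{I_G(v)}=\overline{I_{G_1}(v)}\subseteq Q_j$ etc.\ and then invoke the reflection $G\mapsto G^{\mathrm{op}}$ to halve the case load; this is a genuine economy but not a different idea. One small point of presentation: in your ``genuinely new configuration'' paragraph you treat only the adjacencies $s(e_1)=s(e_2)=v$ and $t(e_1)=s(e_2)=v$; the third adjacency $t(e_1)=t(e_2)=v$ with $e\in P_l$ is in fact impossible by your earlier observation that $\overline{I_G(v)}$ contains only $G_1$-edges, but it would be worth saying so explicitly (the paper does, as its Case~(1.3)).
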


\section{Proof of the main result}
To prove Theorem \ref{main}, we need to show that the composed linear order $\prec=\prec_2\circ\prec_1$ satisfies $(Q_1), (Q_2)$ and the admissible condition.

Same as in Section $4$, assume $(G_1,\sigma_1,\prec_1)$ is an admissible UPO-graph with $n$ output edges $o_1\prec_1\cdots\prec_1 o_n$; $(G_2,\sigma_2,\prec_2)$ is an admissible UPO-graph with $n$ input edges $i_1\prec_2\cdots\prec_2 i_n$; $\overline{e_k}$ $(k=1,2,\cdots,n)$ are newly formed edges by identifying $i_k$ and $o_k$.

\begin{prop}\label{q1}
The composed linear order $\prec$ satisfies $(Q_1)$.
\end{prop}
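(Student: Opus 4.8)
The plan is to reduce everything to the two facts already in hand: that $\prec_1$ and $\prec_2$ each satisfy $(Q_1)$ on their own edge sets, and that by Lemma~\ref{preserve} the composed order $\prec$ restricts to $\prec_1$ on $E(G_1)$ and to $\prec_2$ on $E(G_2)$ under the identifications $o_k=\overline{e_k}=i_k$. The only genuinely new phenomenon is reachability that crosses from $G_1$ into $G_2$, so the first task is to understand the shape of directed paths in $G=G_2\circ G_1$.

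First I would record a structural observation about $G$. Every vertex of $G$ is either a vertex inherited from $G_1$ (an inner or input vertex of $G_1$) or one inherited from $G_2$ (an inner or output vertex of $G_2$); call these the \textbf{upper} and \textbf{lower} vertices. Assign level $0$ to upper vertices and level $1$ to lower ones. Each edge internal to $G_1$ or internal to $G_2$ keeps the level constant, while each glued edge $\overline{e_k}$ has its source $s(o_k)$ upper and its target $t(i_k)$ lower, and hence raises the level by one. No edge lowers the level. Consequently the level is non-decreasing along any directed path, so a directed path in $G$ crosses from the upper part to the lower part at most once, and when it does so it passes through exactly one $\overline{e_k}$.

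With this in hand, the verification of $(Q_1)$ splits into three cases for a directed path witnessing $e_1\to e_2$. If the path stays among upper vertices, then $e_1\to e_2$ holds already inside $G_1$, so $e_1\prec_1 e_2$ by $(Q_1)$ for $\prec_1$, and Lemma~\ref{preserve} gives $e_1\prec e_2$; the case where the path stays among lower vertices is identical with $G_2$ and $\prec_2$ in place of $G_1$ and $\prec_1$. In the remaining case the path crosses through some $\overline{e_k}$: its upper portion is a directed path in $G_1$ from $e_1$ to $o_k$, so $e_1\preceq o_k$ in $\prec_1$ and hence $e_1\preceq\overline{e_k}$ by Lemma~\ref{preserve}, while its lower portion is a directed path in $G_2$ from $i_k$ to $e_2$, so $i_k\preceq e_2$ in $\prec_2$ and hence $\overline{e_k}\preceq e_2$. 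Combining, $e_1\preceq\overline{e_k}\preceq e_2$; since $e_1\to e_2$ forces $e_1\neq e_2$, at least one inequality is strict and $e_1\prec e_2$ follows.

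I expect the only delicate point to be the bookkeeping around the identifications $o_k=\overline{e_k}=i_k$ and the orientation of the gluing: one must check that $\overline{e_k}$ genuinely runs from an upper to a lower vertex, which is what forbids a directed path from returning from $G_2$ to $G_1$ and underlies the level argument. Once that is set up correctly, the crossing case reduces, through the common edge $\overline{e_k}$, to applying preservation on each side separately, so that a purely $G_1$ edge is never compared directly with a purely $G_2$ edge except via this intermediate edge. No intricate computation is needed beyond these transitivity steps.
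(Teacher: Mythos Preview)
Your proof is correct and follows essentially the same three-case split as the paper: reachability entirely within $G_1$, entirely within $G_2$, or crossing from $G_1$ to $G_2$ through some $\overline{e_k}$, each handled via $(Q_1)$ for $\prec_i$ together with Lemma~\ref{preserve}. Your level argument makes explicit why these cases are exhaustive (the paper simply asserts them), and your use of $\preceq$ in the crossing case is slightly more careful, but the underlying argument is identical.
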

\begin{proof}
Assume $e_1\to e_2$ in $G=G_2\circ G_1$. There are three cases.

$(1)$ If $e_1\to e_2$ in $G_1$, then $e_1\prec_1 e_2$ in $G_1$, which, by Lemma \ref{preserve}, implies  that $e_1\prec e_2$ in $G$.

$(2)$ If $e_1\to e_2$ in $G_2$, then $e_1\prec_2 e_2$ in $G_2$, which,  by Lemma \ref{preserve}, implies  that $e_1\prec e_2$ in $G$.

$(3)$ If $e_1$ is an edge of $G_1$ and $e_2$ is an edge of $G_2$, then there must exist an $\overline{e_k}$ such that $e_1\to \overline{e_k}\to e_2$, which means that $e_1\to o_k$ in $G_1$ and $i_k\to e_2$ in $G_2$. Hence $e_1\prec_1 o_k$ in $G_1$ and $i_k\prec_2 e_2$ in $G_2$, which,  by Lemma \ref{preserve}, imply that $e_1\prec \overline{e_k}\prec e_2$ in $G$.
\end{proof}

\begin{prop}\label{ad}
The composed linear order $\prec$ satisfies the admissible condition.
\end{prop}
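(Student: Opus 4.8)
The plan is to verify the two admissibility inequalities separately for inner vertices coming from $G_1$ and those coming from $G_2$, using two structural facts about the shuffle order. Recall that the inner vertices of $G$ are the disjoint union of the inner vertices of $G_1$ and $G_2$, and that $I(G^\sigma)=I(G_1^{\sigma_1})\subseteq E(G_1)$ while $O(G^\sigma)=O(G_2^{\sigma_2})\subseteq E(G_2)$. Throughout, write $\overline{X}$ for the convex hull of $X$ in $(E(G),\prec)$ and $\overline{X}^{(i)}$ for its convex hull in $(E(G_i),\prec_i)$. The first tool is Lemma \ref{preserve}: since $\prec$ restricts to $\prec_i$ on $E(G_i)$, for any $X\subseteq E(G_i)$ the minimum and maximum of $X$ agree in both orders, whence $\overline{X}\cap E(G_i)=\overline{X}^{(i)}$. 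The second tool is the observation that in the composed order each basic interval $Q_a$ and each $P_l$ occurs as one indivisible block, so each is a contiguous interval of $(E(G),\prec)$.

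Two of the four subcases are then immediate. For an inner vertex $v$ of $G_1$, the condition $I(G^\sigma)\cap\overline{O(v)}=\emptyset$ follows because $I(G^\sigma)=I(G_1^{\sigma_1})\subseteq E(G_1)$, so intersecting with $E(G_1)$ and applying the first tool reduces it to $I(G_1^{\sigma_1})\cap\overline{O(v)}^{(1)}=\emptyset$, which is precisely the admissibility of $(G_1,\sigma_1,\prec_1)$. The mirror subcase, namely $O(G^\sigma)\cap\overline{I(v)}=\emptyset$ for an inner vertex $v$ of $G_2$, reduces to the admissibility of $(G_2,\sigma_2,\prec_2)$ in exactly the same way, using $O(G^\sigma)=O(G_2^{\sigma_2})\subseteq E(G_2)$.

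The crux is the remaining two subcases, in which the boundary edges to be avoided live in the opposite factor, so the danger is that the convex hull computed in $\prec$ spills across into that factor. Consider an inner vertex $v$ of $G_1$ and the condition $O(G^\sigma)\cap\overline{I(v)}=\emptyset$; here $O(G^\sigma)\subseteq E(G_2)$ is disjoint from $E(G_1)$. First I would note that $I(v)\subseteq E(G_1)$ contains no output edge of $G_1$ (otherwise $v$ would be a sink), so $I(v)$ meets only the basic intervals $Q_\bullet$. Admissibility of $G_1$ says no output edge $o_k$ lies in $\overline{I(v)}^{(1)}$; since the $o_k$ are exactly the separators between consecutive basic intervals, this forces $I(v)^-$ and $I(v)^+$ into a single basic interval $Q_a$, so $\overline{I(v)}^{(1)}\subseteq Q_a$. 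Because $Q_a$ is a contiguous interval of $(E(G),\prec)$ containing both endpoints, the hull $\overline{I(v)}$ is itself contained in $Q_a\subseteq E(G_1)$, and is therefore disjoint from $O(G^\sigma)$. The fourth subcase ($v$ inner in $G_2$, condition $I(G^\sigma)\cap\overline{O(v)}=\emptyset$) is symmetric: admissibility of $G_2$ confines $\overline{O(v)}$ to a single block $P_l\subseteq E(G_2)$, disjoint from $I(G^\sigma)\subseteq E(G_1)$. The main obstacle is exactly this confinement step, which succeeds only by combining the admissibility of the relevant factor (to pin the hull inside one basic interval) with the block-contiguity of basic intervals in the shuffle order (to prevent the hull from leaking into the other factor); everything else is bookkeeping.
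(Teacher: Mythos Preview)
Your proposal is correct and follows essentially the same approach as the paper: split into the four subcases according to whether $v$ is inner in $G_1$ or $G_2$ and which of the two admissibility conditions is being checked, handle the two ``same-factor'' subcases directly via Lemma~\ref{preserve}, and for the two ``cross-factor'' subcases use admissibility of the relevant factor to trap the convex hull inside a single basic interval $Q_\alpha$ or $P_\beta$, which remains a contiguous block in the shuffle order. Your tool~1 (the identity $\overline{X}\cap E(G_i)=\overline{X}^{(i)}$) makes explicit what the paper's invocation of Lemma~\ref{preserve} leaves implicit, and your preliminary remark that $I(v)$ contains no output edge of $G_1$ is harmless but redundant, since it is subsumed by the admissibility statement $O(G_1)\cap\overline{I(v)}^{(1)}=\emptyset$.
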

\begin{proof}
Let  $v$  be an arbitrary inner vertex of $G$, we want to show that $I(G)\cap\overline{O(v)}=\emptyset$ and $O(G)\cap\overline{I(v)}=\emptyset$ in $G$. By the definition of $G$, we have $I(G)=I(G_1)$, $O(G)=O(G_2)$. So  we only need to  show that $I(G_1)\cap\overline{O(v)}=\emptyset$ and $O(G_2)\cap\overline{I(v)}=\emptyset$ in $G$.

We have two cases.

$(1)$ $v$ is an inner vertex of $G_1$. Since $(G_1,\sigma_1,\prec_1)$ satisfies the admissible condition, then  $I(G_1)\cap \overline{O(v)}=\emptyset$ in $G_1$, which,  by Lemma \ref{preserve}, implies that $I(G_1)\cap \overline{O(v)}=\emptyset$ in $G$.

Similarly, by the admissible condition of $(G_1,\sigma_1,\prec_1)$, we have $O(G_1)\cap \overline{I(v)}=\emptyset$ in $G_1$, that is, $\{o_1,o_2,\cdots,o_n\}\cap \overline{I(v)}=\emptyset$ in $G_1$, then there must be a unique $Q_\alpha$ $(\alpha\in\{1,2,\cdots,n+1\})$ such that $\overline{I(v)}\subseteq Q_\alpha$  in $G_1$, which, by the composition rule, implies  $\overline{I(v)}\subseteq Q_\alpha$ is also true in $G$. Hence in $G$, $O(G_2)\cap\overline{I(v)}\subseteq O(G_2)\cap Q_\alpha= \emptyset$.

$(2)$ $v$ is an inner vertex of $G_2$. The proof is similar to case $(1)$. By the admissible condition of $(G_2,\sigma_2,\prec_2)$, we have $I(G_2)\cap\overline{O(v)}=\emptyset$ in $G_2$, that is, $\{i_1,i_2,\cdots,i_n\}\cap \overline{O(v)}=\emptyset$ in $G_2$, then there must be a unique $P_\beta$  $(\beta\in\{0,1,\cdots,n\})$ such that $\overline{O(v)}\subseteq P_\beta$ in $G_2$. Hence in $G$, $I(G_1)\cap\overline{O(v)}\subseteq I(G_1)\cap P_\beta=\emptyset$.

Also by the admissible condition of $(G_2,\sigma_2,\prec_2)$, we have $O(G_2)\cap\overline{I(v)}=\emptyset$ in $G_2$, which, by
Lemma \ref{preserve}, implies that $O(G_2)\cap \overline{I(v)}=\emptyset$ in $G$.
\end{proof}

The following lemma shows that the composed linear order $\prec$ satisfies the $(U_2)$ condition.
\begin{lem}\label{lem2}
Let $v$ be a processive vertex of $G$. Then $O(v)^-=I(v)^++1$ under the composed linear order $\prec$.
\end{lem}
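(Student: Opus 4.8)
The plan is to reduce the statement to the $(U_2)$ consequence already available for the factor orders $\prec_1$ and $\prec_2$, and then to transport the immediate-successor relation across the shuffle by means of the admissible condition. First I would note that a processive vertex is neither a source nor a sink, hence cannot be a boundary vertex (boundary vertices are leaves); so $v$ is an inner vertex of $G$, and by the description of $G=G_2\circ G_1$ it is either an inner vertex of $G_1$ or an inner vertex of $G_2$. When $v$ is inner in $G_1$, gluing does not disturb the edges incident to $v$: its incoming edges are never output edges of $G_1$, while an outgoing output edge $o_k$ is merely renamed $\overline{e_k}$. Thus, under the identification $o_k=\overline{e_k}$, the sets $I(v)$ and $O(v)$ computed in $G$ coincide with those computed in $G_1$, and $v$ is still processive. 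The case of an inner vertex of $G_2$ is symmetric.

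Next, since $\prec_1$ is an upward planar order it satisfies $(U_2)$, which (as noted after Definition \ref{upo}) gives $O(v)^-=I(v)^++1$ under $\prec_1$; write $a=I(v)^+$ and $b=O(v)^-$, so $b$ is the immediate $\prec_1$-successor of $a$. By Lemma \ref{preserve}, the restriction of $\prec$ to $E(G_1)$ equals $\prec_1$, so $a,b$ are still the maximal incoming and minimal outgoing edges of $v$ in $G$ and $a\prec b$. The crux is to show that the shuffle inserts nothing from $E(G_2)$ strictly between $a$ and $b$. Here I would invoke admissibility: from $O(G_1)\cap\overline{I(v)}=\emptyset$ the convex hull $\overline{I(v)}$ contains no output edge $o_k$, hence lies inside a single basic interval $Q_\alpha$, so $a\in Q_\alpha$. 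Because the shuffle places $Q_\alpha$ as a contiguous block immediately followed by $\{\overline{e_\alpha}\}$, the immediate $\prec$-successor of an element of $Q_\alpha$ is either the next element of $Q_\alpha$ (if $a\neq\max Q_\alpha$) or $\overline{e_\alpha}=o_\alpha$ (if $a=\max Q_\alpha$); in both cases it equals the $\prec_1$-successor of $a$, namely $b$. This gives $O(v)^-=I(v)^++1$ under $\prec$.

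The case where $v$ is inner in $G_2$ is dual: I would apply $(U_2)$ for $\prec_2$ together with $I(G_2)\cap\overline{O(v)}=\emptyset$ to conclude that $\overline{O(v)}$, and in particular $b=O(v)^-$, lies in a single basic interval $P_\beta$, then read the immediate $\prec$-predecessor of $b$ off the contiguous block $\{\overline{e_\beta}\}\triangleleft P_\beta$ and match it with the $\prec_2$-predecessor $a=I(v)^+$. A small point to dispatch is that $b=\min P_0$ is impossible for processive $v$, since $\min P_0$ is the global minimum of $\prec_2$ whereas $a\prec b$ requires a predecessor; similarly, for the top block $Q_{n+1}$ the element $a$ cannot be maximal, so only the interior sub-case occurs there.

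The step I expect to be the main obstacle is exactly the boundary sub-case $a=\max Q_\alpha$, $b=o_\alpha=\overline{e_\alpha}$ (and its dual $b=\min P_\beta$, $a=i_\beta=\overline{e_\beta}$): here the successor of $I(v)^+$ jumps across an interval boundary of the shuffle, so I must verify that no $P$-block edge is interposed between $Q_\alpha$ and $\overline{e_\alpha}$. This is precisely what the admissible condition secures, by forcing the relevant convex hull into a single basic interval, and making this localization explicit is the heart of the argument; the interior sub-cases then follow immediately from the contiguity of basic intervals in the shuffle together with Lemma \ref{preserve}.
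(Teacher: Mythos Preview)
Your proposal is correct and follows essentially the same route as the paper: reduce to $v$ being an inner vertex of $G_1$ (or symmetrically $G_2$), use admissibility to confine $\overline{I(v)}$ to a single basic interval $Q_\alpha$, apply $(U_2)$ for $\prec_1$ to get the immediate-successor relation $a=I(v)^+$, $b=O(v)^-$, and then observe that in the shuffle the successor of $a\in Q_\alpha$ is either the next element of $Q_\alpha$ or $\overline{e_\alpha}=o_\alpha$, matching $b$. You are actually slightly more careful than the paper in dispatching the extremal blocks $Q_{n+1}$ and $P_0$, but the argument is the same.
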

\begin{proof}
Since $v$ is processive, then it must be either an inner vertex of $G_1$ or an inner vertex of $G_2$. We only prove the former case, the proof of the latter case is similar.

In the former case, since $\prec_1$ is admissible, then $\overline{I(v)}\cap \{o_1,\cdots,o_n\}=\emptyset$, which implies that $\overline{I(v)}\subseteq Q_\alpha$ for a unique $\alpha\in \{1,\cdots,n\}$. Notice that $\prec_1$ is an upward planar order on $G_1$, then, by Theorem \ref{eq}, $\prec_1$ satisfies the $(U_2)$ condition, which says that $O(v)^-=I(v)^++1$ in $(E(G_1),\prec_1)$. Therefore, $\overline{I(v)}\triangleleft \{O(v)^-\}\subseteq Q_\alpha \triangleleft\{o_\alpha\}$, which implies that either $O(v)^-\in Q_\alpha$ or $O(v)^-=o_\alpha$ in $G_1$. Both cases, by the composition rule, imply that $O(v)^-=I(v)^++1$ in $(E(G),\prec)$.
\end{proof}

For convenience, we introduce some notations. Let $J_1=Q_1\triangleleft\{o_1\}$, $\cdots$, $J_n=Q_n\triangleleft\{o_n\}$, $J_{n+1}=Q_{n+1}$ and call them the \textbf{right closures} of $Q_1,\cdots, Q_n, Q_{n+1}$ in $G_1$, respectively. Let $I_0=P_0$, $I_1=\{i_1\}\triangleleft P_1$, $\cdots$, $I_{n}=\{i_n\}\triangleleft P_n$ and call them the \textbf{left closures} of $P_0, P_1, \cdots, P_n$ in $G_2$, respectively.
Then we have $J_k\cap I_k=\{\overline{e_k}\}$,  $J_k\trianglelefteq I_k=Q_k\triangleleft \{\overline{e_k}\}\triangleleft P_k$ for $k\in\{1,2,\cdots,n\}$ in $G$, and

 $$(E(G_1), \prec_1)=J_1\triangleleft...\triangleleft J_k\triangleleft...\triangleleft J_n\triangleleft J_{n+1}, $$
$$(E(G_2), \prec_2)=I_0\triangleleft I_1\triangleleft...\triangleleft I_k \triangleleft...\triangleleft I_n,$$
$$(E(G), \prec)=I_0\triangleleft \Big(J_1\trianglelefteq I_1\Big)\triangleleft...\triangleleft \Big(J_k\trianglelefteq I_k\Big) \triangleleft...\triangleleft \Big(J_n \trianglelefteq I_n\Big) \triangleleft J_{n+1}.$$

\begin{prop}\label{q2}
The composed  linear order $\prec$ satisfies $(Q_2)$.
\end{prop}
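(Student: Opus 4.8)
The plan is to prove $(Q_2)$ by tracking which of the two factors each of the three edges $e_1\prec e\prec e_2$ belongs to. First I would record a structural reduction: the common vertex $v$ of the adjacent pair $e_1,e_2$ has degree at least two, so it is not a boundary leaf and is therefore an inner vertex of $G$; moreover, since no vertex of $G$ belongs to both factors, every edge incident to $v$ lies in the same factor as $v$. Hence $v$ is an inner vertex of exactly one of $G_1,G_2$, and $e_1,e_2$ both lie in the corresponding edge set. I will carry out the case $v\in G_1$ in full and note that the case $v\in G_2$ is the exact dual, obtained by reversing edge directions and interchanging $I\leftrightarrow O$, $G_1\leftrightarrow G_2$, and the families of basic intervals $Q\leftrightarrow P$.

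Assume $v\in G_1$, so $e_1,e_2\in E(G_1)$, and split on the location of the middle edge $e$. If $e\in E(G_1)$ (this includes $e=\overline{e_k}$, read as the output edge $o_k$), then by Lemma \ref{preserve} the restriction of $\prec$ to $E(G_1)$ is $\prec_1$, so $e_1\prec_1 e\prec_1 e_2$, and the required nesting holds in $G_1$ because $\prec_1$ satisfies $(Q_2)$ by Theorem \ref{eq}. This transfers to $G$ verbatim: the sets $I(t(e))$, $O(s(e))$, $I(v)$, $O(v)$ consist of $G_1$-edges, and because $\prec$ extends $\prec_1$ the extremal elements $I(v)^{\pm}$, $O(v)^{\pm}$ are unchanged, so any $G_1$-edge lying in $\overline{I(v)}$ (resp. $\overline{O(v)}$) in $G_1$ still lies there in $G$.

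The substantive sub-case is $e$ a non-shared edge of $G_2$, i.e. $e\in P_l$; here $l\geq 1$, since $P_0$ has no $G_1$-edge below it. Reading the shuffle order, $e_1\prec e$ forces $e_1\preceq_1 o_l$ and $e\prec e_2$ forces $o_l\prec_1 e_2$. I then run through the three configurations of $(Q_2)$. If $t(e_1)=t(e_2)=v$, then $e_1\neq o_l$ (as $t(e_1)=v$ is inner while $t(o_l)$ is an output vertex), so $e_1\prec_1 o_l\prec_1 e_2$ gives $o_l\in\overline{I(v)}$ in $G_1$, contradicting $O(G_1)\cap\overline{I(v)}=\emptyset$; this configuration is vacuous. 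If $s(e_1)=s(e_2)=v$, set $w=s(e)$ and observe $w$ is an inner vertex of $G_2$; admissibility of $\prec_2$ gives $I(G_2)\cap\overline{O(w)}=\emptyset$ in $G_2$, so $\overline{O(w)}$ avoids every input edge and hence lies in a single basic interval, which must be $P_l$ since $e\in O(w)\cap P_l$; thus $O(w)\subseteq P_l$. The bounds $O(v)^-\preceq_1 o_l\prec_1 O(v)^+$ then place the whole block $P_l$ strictly inside $[O(v)^-,O(v)^+]_{\prec}=\overline{O(v)}$ in $G$, giving $O(s(e))\subseteq\overline{O(v)}$ in $G$. The mixed configuration $t(e_1)=s(e_2)=v$ reduces to this one: by the $(U_2)$-adjacency of $\overline{I(v)}$ and $\overline{O(v)}$ the edge $o_l$ lies in $\overline{I(v)}$ or $\overline{O(v)}$, the former being excluded by admissibility, so $o_l\in\overline{O(v)}$ and the same interval estimate yields $O(s(e))\subseteq\overline{O(v)}$.

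The main obstacle is exactly this cross sub-case, and the device that resolves it is the \emph{confinement} afforded by admissibility: for an inner vertex of either factor, the incoming (resp. outgoing) convex hull avoids all boundary edges and therefore sits inside one basic interval, which in turn nests cleanly into the target convex hull of the composed order. Once confinement is established, each containment follows immediately from the shuffle structure; the only delicate bookkeeping is the choice of basic interval and the strict-versus-nonstrict comparisons at the boundary edges $o_l$ (and $i_k$ in the dual case), which I would verify carefully. Combining this proposition with Propositions \ref{q1} and \ref{ad} establishes that $\prec$ is an admissible upward planar order, proving Theorem \ref{main}.
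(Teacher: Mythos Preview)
Your organization differs from the paper's (which splits first on the three adjacency types, then on edge locations, and invokes Lemma~\ref{lem2} to reduce the mixed type to the other two); your structural reduction pinning $e_1,e_2$ to the factor of $v$ is a genuine simplification, and the cross sub-case $e\in P_l$ is argued correctly. There is, however, a gap in the ``easy'' sub-case $e\in E(G_1)$, exactly at the shared edges you flag. You assert that the sets $I(t(e)),O(s(e)),I(v),O(v)$ all consist of $G_1$-edges, but when $e=\overline{e_k}$ the target $t(e)$ in $G$ is the $G_2$-vertex $t(i_k)$, so $I_G(t(e))$ is a set of $G_2$-edges (possibly several), not the singleton $\{o_k\}$ one sees in $G_1$. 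The ``verbatim transfer'' of the $I$-alternative therefore fails as written in the first and third configurations.

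The repair is short and uses the same admissibility device you deploy later. With $v$ inner in $G_1$ and $e_1\prec_1 o_k\prec_1 e_2$, the configuration $t(e_1)=t(e_2)=v$ forces $o_k\in\overline{I(v)}$ in $G_1$, contradicting $O(G_1^{\sigma_1})\cap\overline{I(v)}=\emptyset$; so that configuration is vacuous. In the mixed configuration $t(e_1)=s(e_2)=v$ the $I$-alternative of $(Q_2)$ for $\prec_1$ would again place $o_k\in\overline{I(v)}$, which is impossible; hence $(Q_2)$ must yield $O_{G_1}(s(o_k))\subseteq\overline{O(v)}$, and since $s(\overline{e_k})$ lies in $G_1$ this inclusion does transfer to $G$. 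Add these two observations (and their duals for $v\in G_2$, where it is $O_G(s(\overline{e_k}))$ that lives in the other factor) and your argument is complete.
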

\begin{proof}
We want to prove that if $e_1\prec e\prec e_2$ and $e_1$, $e_2$ are adjacent, then
\begin{equation*}
\begin{cases}
I(t(e))\subseteq \overline{I(v)},&  \text{if}\ t(e_1)=t(e_2)=v;\\
O(s(e))\subseteq \overline{O(v)},&  \text{if}\ s(e_1)=s(e_2)=v;\\
\text{either}\ I(t(e))\subseteq \overline{I(v)}\  \text{or}\ O(s(e))\subseteq \overline{O(v)},&  \text{if}\ t(e_1)=s(e_2)=v.
\end{cases}
\end{equation*}
We will prove this in three cases.

\textbf{Case 1.} $t(e_1)=t(e_2)=v$. We have four subcases.

 $(1.1)$ $e_1,e_2, e\in E(G_1)$. In this case, by the composition rule and  Lemma \ref{preserve}, $I(t(e))\subseteq \overline{I(v)}$ in  $G_1$ implies $I(t(e))\subseteq \overline{I(v)}$ in $G$.
\begin{center}
	\begin{tikzpicture}[scale=0.85]
		\draw[dashed]  (-2,2.5) rectangle (1,1);
		\draw[dashed]  (-2,0.5) rectangle (1,-1);
		\node at (-1,1.5) {};
		\node at (-1.5,2) {};
		\node at (-0.5,2) {};
		\node at (0.5,1.5) {};
		\draw[fill] (-1,1.5) circle [radius=0.03];
		\draw[fill] (0.5,1.5) circle [radius=0.03];
		\draw (-1.5,2) -- (-1,1.5)[postaction={decorate, decoration={markings,mark=at position .7 with {\arrow[black]{stealth}}}}];;
		\draw (-0.5,2) -- (-1,1.5)[postaction={decorate, decoration={markings,mark=at position .7 with {\arrow[black]{stealth}}}}];;
		\draw (0.5,2) -- (0.5,1.5)[postaction={decorate, decoration={markings,mark=at position .7 with {\arrow[black]{stealth}}}}];;
		\node at (0.5,2) {};
		\node[scale=0.68] at (-1.35,1.6) {$e_1$};
		\node[scale=0.68] at (-0.6,1.6) {$e_2$};
		\node[scale=0.68] at (0.65,1.7) {$e$};
\node[scale=0.68] at (-1,1.3) {$v$};
	\end{tikzpicture}
   \end{center}

$(1.2)$ $e_1,e_2, e\in E(G_2)$.  In this case, by the composition rule and  Lemma \ref{preserve}, $I(t(e))\subseteq \overline{I(v)}$ in  $G_2$ implies $I(t(e))\subseteq \overline{I(v)}$ in $G$.
\begin{center}
	\begin{tikzpicture}[scale=0.85]
		\node at (-1,0) {};
		\node at (-0.5,-0.5) {};
		\node at (0,0) {};
		\node at (1,-0.5) {};
		\draw[fill] (-0.5,-0.5) circle [radius=0.03];
		\draw[fill] (1,-0.5) circle [radius=0.03];
		\draw (-1,0) -- (-0.5,-0.5)[postaction={decorate, decoration={markings,mark=at position .7 with {\arrow[black]{stealth}}}}];;
		\draw (0,0) -- (-0.5,-0.5)[postaction={decorate, decoration={markings,mark=at position .7 with {\arrow[black]{stealth}}}}];;
		\draw (1,0) -- (1,-0.5)[postaction={decorate, decoration={markings,mark=at position .7 with {\arrow[black]{stealth}}}}];;
		\node at (1,0) {};
		\node[scale=0.68] at (-0.85,-0.4) {$e_1$};
		\node[scale=0.68] at (-0.1,-0.4) {$e_2$};
		\node[scale=0.68] at (1.2,-0.3) {$e$};
		\draw [dashed] (-1.5,0.5) rectangle (1.5,-1);
		\draw[dashed]  (-1.5,2.5) rectangle (1.5,1);
		\node[scale=0.68] at (-0.5,-0.7) {$v$};
	\end{tikzpicture}
\end{center}

$(1.3)$ $e_1,e_2 \in E(G_1)$ and $e\in E(G_2)$. In this case, $v$ must be an inner vertex of $G_1$. Since $\prec_1$ is admissible, then $\overline{I(v)}\cap\{o_1,\cdots, o_n\}=\emptyset$ in $G_1$, which implies that the interval $\overline{I(v)}\subseteq Q_\alpha$ for a unique $\alpha\in\{1,\cdots, n+1\}$ in $G_1$. Since $e\in E(G_2)$, then $e\in I_\beta$ for a unique $\beta\in \{0,\cdots, n\}$. By the composition rule, we have either $Q_\alpha\triangleleft I_\beta$ or $I_\beta\triangleleft Q_\alpha$ in $G$, both of which contradict $e_1\prec e\prec e_2$. Therefore, this subcase is impossible.

\begin{center}
	\begin{tikzpicture}[scale=0.85]
		\draw[dashed]  (-2,2.5) rectangle (1,1);
		\draw  [dashed](-2,0.5) rectangle (1,-1);
		\draw  [dashed](-1.5,2) rectangle (-1.5,2);
		\draw [dashed] (-1,1.5) rectangle (-1,1.5);
		\draw [dashed] (-0.5,2) rectangle (-0.5,2);
		\draw (-1.5,2) -- (-1,1.5)[postaction={decorate, decoration={markings,mark=at position .7 with {\arrow[black]{stealth}}}}];;
		\draw (-0.5,2) -- (-1,1.5)[postaction={decorate, decoration={markings,mark=at position .7 with {\arrow[black]{stealth}}}}];;
		\node[scale=0.68] at (-1.35,1.6) {$e_1$};
		\node[scale=0.68] at (-0.6,1.6) {$e_2$};
		\draw[fill] (-1,1.5) circle [radius=0.03];
		\draw  (0,0) rectangle (0,0);
		\draw  (0,-0.5) rectangle (0,-0.5);
		\draw[fill] (0,-0.5) circle [radius=0.03];
		\draw (0,0) -- (0,-0.5)[postaction={decorate, decoration={markings,mark=at position .7 with {\arrow[black]{stealth}}}}];;
		\draw  (0,-2.5) rectangle (0,-2.5);
		\node[scale=0.68] at (0.2,-0.3) {$e$};
			\node[scale=0.68] at (-0.95,1.35) {$v$};
	\end{tikzpicture}
\end{center}
$(1.4)$ $e\in E(G_1)$ and $e_1,e_2\in E(G_2)$. In this case,
if $e=\overline{e_\gamma}$, then $I(t(e))=\{e\}$, which, by $e_1\prec e\prec e_2$, implies that $I(t(e))\subset [e_1,e_2]\subseteq \overline{I(v)}$ in $G$. Otherwise, $t(e)$ is an inner vertex of $G_1$, then by the fact that
$\prec_1$ is admissible, we have $\overline{I(t(e))}\cap\{o_1,\cdots,o_n\}=\emptyset$ in $G_1$, which implies that $I(t(e))\subseteq \overline{I(t(e))} \subseteq Q_\gamma$ for a unique $\gamma\in\{1,\cdots,n+1\}$ in $G_1$. Clearly, $e_1\in I_\alpha$, $e_2\in I_\beta$ for unique $\alpha,\beta\in\{0,\cdots,n\}$ in $G_2$. By $e_1\prec e\prec e_2$ and the composition rule, we have $I_\alpha\triangleleft Q_\gamma \triangleleft I_\beta$, which implies that $I(t(e))\subseteq \overline{I(v)}$ in $G$.

\begin{center}
	\begin{tikzpicture}[scale=0.85]
		\draw [dashed]  (-2.5,2) rectangle (0.5,0.5);
		\draw [dashed]  (-2.5,4) rectangle (0.5,2.5);
		\node at (0,3) {};
		\node at (0,3.5) {};
		\node at (-1.5,1) {};
		\node at (-2,1.5) {};
		\node at (-1,1.5) {};
		\draw[fill] (-1.5,1) circle [radius=0.03];
		\draw[fill] (0,3) circle [radius=0.03];
		\draw (0,3.5) -- (0,3)[postaction={decorate, decoration={markings,mark=at position .7 with {\arrow[black]{stealth}}}}];;
		\draw (-2,1.5) -- (-1.5,1)[postaction={decorate, decoration={markings,mark=at position .7 with {\arrow[black]{stealth}}}}];;
		\draw (-1,1.5) -- (-1.5,1)[postaction={decorate, decoration={markings,mark=at position .7 with {\arrow[black]{stealth}}}}];;
		\node[scale=0.68] at (0.2,3.2) {$e$};
		\node[scale=0.68] at (-1.5,0.8) {$v$};
		\node[scale=0.68] at (-2,1.2) {$e_1$};
		\node[scale=0.68] at (-1,1.2) {$e_2$};
	\end{tikzpicture}
\end{center}

\textbf{Case 2.}  $s(e_1)=s(e_2)=v$.  We have four subcases.

 $(2.1)$ $e_1,e_2, e\in E(G_1)$. Similar to case $(1.1)$, by the composition rule and  Lemma \ref{preserve}, $O(s(e))\subseteq \overline{O(v)}$ in  $G_1$ implies $O(s(e))\subseteq \overline{O(v)}$ in $G$.
 \begin{center}
 \begin{tikzpicture}[scale=0.85]
		\node at (-1,2) {};
		\node at (-1.5,1.5) {};
		\node at (-0.5,1.5) {};
		\node at (0.5,2) {};
		\draw[fill] (-1,2) circle [radius=0.03];
		\draw[fill] (0.5,2) circle [radius=0.03];
		\draw (-1,2) -- (-0.5,1.5)[postaction={decorate, decoration={markings,mark=at position .7 with {\arrow[black]{stealth}}}}];;
		\draw (-1,2) -- (-1.5,1.5)[postaction={decorate, decoration={markings,mark=at position .7 with {\arrow[black]{stealth}}}}];;
		\draw (0.5,2) -- (0.5,1.5)[postaction={decorate, decoration={markings,mark=at position .7 with {\arrow[black]{stealth}}}}];;
		\node at (0.5,1.5) {};
		\node[scale=0.68] at (0.65,1.8) {$e$};
		\node[scale=0.68] at (-1.45,1.8) {$e_1$};
		\node[scale=0.68] at (-0.5,1.8) {$e_2$};
		\draw[dashed]  (-2,2.5) rectangle (1,1);
		\draw[dashed]  (-2,0.5) rectangle (1,-1);
		\node[scale=0.68] at (-1,2.2) {$v$};
	\end{tikzpicture}
\end{center}

$(2.2)$ $e_1,e_2, e\in E(G_2)$.  Similar to case $(1.2)$, by the composition rule and  Lemma \ref{preserve}, $O(s(e))\subseteq \overline{O(v)}$ in  $G_2$ implies $O(s(e))\subseteq \overline{O(v)}$ in $G$.

\begin{center}
	\begin{tikzpicture}[scale=0.85]
		\node at (-1,2) {};
		\node at (-1.5,1.5) {};
		\node at (-0.5,1.5) {};
		\node at (0.5,2) {};
		\draw[fill] (-1,2) circle [radius=0.03];
		\draw[fill] (0.5,2) circle [radius=0.03];
		\draw (-1,2) -- (-0.5,1.5)[postaction={decorate, decoration={markings,mark=at position .7 with {\arrow[black]{stealth}}}}];;
		\draw (-1,2) -- (-1.5,1.5)[postaction={decorate, decoration={markings,mark=at position .7 with {\arrow[black]{stealth}}}}];;
		\draw (0.5,2) -- (0.5,1.5)[postaction={decorate, decoration={markings,mark=at position .7 with {\arrow[black]{stealth}}}}];;
		\node at (0.5,1.5) {};
		\node[scale=0.68] at (0.65,1.8) {$e$};
		\node[scale=0.68] at (-1.45,1.8) {$e_1$};
		\node[scale=0.68] at (-0.5,1.8) {$e_2$};
		\draw [dashed] (-2,2.5) rectangle (1,1);
		\draw [dashed](-2,4.5) rectangle (1,3);
		\node[scale=0.68] at (-1,2.2) {$v$};
	\end{tikzpicture}
\end{center}

$(2.3)$ $e\in E(G_1)$ and $e_1,e_2\in E(G_2)$. Similar to case $(1.3)$, this case is impossible. In fact, $v$ must be an inner vertex of $G_2$. Since $\prec_2$ is admissible, then $\overline{O(v)}\cap\{i_1,\cdots, i_n\}=\emptyset$ in $G_2$, which implies that the interval $\overline{O(v)}\subseteq P_\alpha$ for a unique $\alpha\in\{0,\cdots, n\}$ in $G_2$. Since $e\in E(G_1)$, then $e\in J_\beta$ for a unique $\beta\in \{1,\cdots, n+1\}$. By the composition rule, we have either $P_\alpha\triangleleft J_\beta$ or $J_\beta\triangleleft P_\alpha$ in $G$, both of which contradict $e_1\prec e\prec e_2$.
\begin{center}
	\begin{tikzpicture}[scale=0.85]
		\draw [dashed]   (-2,2.5) rectangle (1,1);
		\draw [dashed]  (-2,0.5) rectangle (1,-1);
		\node at (-1,0) {};
		\node at (-1.5,-0.5) {};
		\node at (-0.5,-0.5) {};
		\draw (-1,0) -- (-1.5,-0.5)[postaction={decorate, decoration={markings,mark=at position .7 with {\arrow[black]{stealth}}}}];;
		\draw (-1,0) -- (-0.5,-0.5)[postaction={decorate, decoration={markings,mark=at position .7 with {\arrow[black]{stealth}}}}];;
		\draw[fill] (-1,0) circle [radius=0.03];
		\node[scale=0.68] at (-1.5,-0.2) {$e_1$};
		\node[scale=0.68] at (-0.5,-0.2) {$e_2$};
		\node at (0.5,2) {};
		\node at (0.5,1.5) {};
		\draw[fill] (0.5,2) circle [radius=0.03];
		\draw (0.5,2) -- (0.5,1.5)[postaction={decorate, decoration={markings,mark=at position .7 with {\arrow[black]{stealth}}}}];;
		\node[scale=0.68] at (0.7,1.7) {$e$};
		\node[scale=0.68] at (-1,0.2) {$v$};
	\end{tikzpicture}
\end{center}

$(2.4)$  $e_1,e_2 \in E(G_1)$ and $e\in E(G_2)$. Similar to case $(1.4)$,
if $e=\overline{e_\gamma}$, then $O(s(e))=\{e\}$, which, by $e_1\prec e\prec e_2$, implies that $O(s(e))\subset [e_1,e_2]\subseteq \overline{O(v)}$ in $G$. Otherwise, $s(e)$ is an inner vertex of $G_2$, then by the fact that
$\prec_2$ is admissible, we have $\overline{O(s(e))}\cap\{i_1,\cdots,i_n\}=\emptyset$ in $G_2$, which implies that $O(s(e))\subseteq \overline{O(s(e))} \subseteq P_\gamma$ for a unique $\gamma\in\{0,\cdots,n\}$ in $G_2$. Clearly, $e_1\in J_\alpha$, $e_2\in J_\beta$ for unique $\alpha,\beta\in\{0,\cdots,n\}$ in $G_1$. By $e_1\prec e\prec e_2$ and the composition rule, we have $J_\alpha\triangleleft P_\gamma \triangleleft J_\beta$, which implies that $O(s(e))\subseteq \overline{O(v)}$ in $G$.

\begin{center}
	\begin{tikzpicture}[scale=0.85]
		\draw [dashed]  (-2.5,0.5) rectangle (-2.5,0.5) node (v1) {};
		\node at (-1.5,2) {};
		\node at (-2,1.5) {};
		\node at (-1,1.5) {};
		\node at (0.5,0) {};
		\node at (0.5,-0.5) {};
		\draw[fill] (-1.5,2) circle [radius=0.03];
		\draw[fill] (0,0) circle [radius=0.03];
		\draw (0,0) -- (0,-0.5)[postaction={decorate, decoration={markings,mark=at position .7 with {\arrow[black]{stealth}}}}];;
		\draw (-1.5,2) -- (-2,1.5)[postaction={decorate, decoration={markings,mark=at position .7 with {\arrow[black]{stealth}}}}];;
		\draw (-1.5,2) -- (-1,1.5)[postaction={decorate, decoration={markings,mark=at position .7 with {\arrow[black]{stealth}}}}];;
		\node[scale=0.68] at (-2,1.8) {$e_1$};
		\node[scale=0.68] at (-1,1.8) {$e_2$};
		\node[scale=0.68] at (0.2,-0.3) {$e$};
		\node[scale=0.68] at (-1.5,2.2) {$v$};
		\draw [dashed]  (-2.5,2.5) rectangle (0.5,1);
		\draw [dashed] (v1) rectangle (0.5,-1);
		\node at (0,0) {};
		\node at (0,-0.5) {};
	\end{tikzpicture}
\end{center}

\textbf{Case 3.} $t(e_1)=s(e_2)=v$. In this case, $v$ is a processive vertex of $G$. By Lemma \ref{lem2}, we have $[e_1,e_2]=[e_1,I(v)^+]\sqcup [O(v)^+,e_2]$ in $(E(G),\prec)$. Then $e_1\prec e\prec e_2$ implies that either $e_1\prec e\preceq I(v)^+$ or $O(v)^-\preceq e\prec e_2$. In the former case, by case $1$, we have $I(t(e))\subseteq \overline{I(v)}$. In the latter case, by case $2$, we have $O(s(e))\subseteq \overline{O(v)}$.

		\begin{center}
		\begin{tikzpicture}[scale=1.2]
			\node at (-0.5,2) {};
			\draw[fill] (-0.5,2) circle [radius=0.03];
			\node at (0,2.5) {};
			\node at (-1,2.5) {};
			\draw (0,2.7) -- (-0.5,2)[postaction={decorate, decoration={markings,mark=at position .5 with {\arrow[black]{stealth}}}}];;
			\draw (-1,2.7) -- (-0.5,2)[postaction={decorate, decoration={markings,mark=at position .5 with {\arrow[black]{stealth}}}}];;
			\node at (-1,1.5) {};
			\node at (0,1.5) {};
			\draw (-0.5,2) -- (-1,1.4)[postaction={decorate, decoration={markings,mark=at position .7 with {\arrow[black]{stealth}}}}];;
			\draw (-0.5,2) -- (0,1.4)[postaction={decorate, decoration={markings,mark=at position .7 with {\arrow[black]{stealth}}}}];;
			\node[scale=0.68] at (-1,2.5) {$e_1$};
			\node[scale=0.68] at (0.2,2.5) {$I(v)^{+}$};
			\node[scale=0.68] at (0.15,1.5) {$e_2$};
			\node[scale=0.68] at (-1.2,1.5) {$O(v)^{-}$};
			\node[scale=0.68] at (-0.65,2) {$v$};
		\end{tikzpicture}
	\end{center}
\end{proof}

Theorem \ref{main} is a direct consequence of Proposition \ref{q1}, Proposition \ref{ad} and Proposition \ref{q2}.
\section*{Acknowledgement}
This article is dedicated to Professor Ke Wu in Capital Normal University in celebration of his 80th birthday.

\par


\textbf{Xuexing Lu}\hfill \\  Email: xxlu@uzz.edu.cn \\ https://orcid.org/0000-0002-8066-1700

\textbf{Xue Dong}\hfill \\  Email: 3070973116@qq.com

\textbf{Yu Ye} \hfill \\ Email: yeyu@ustc.edu.cn

\end{document}